\theoremstyle{definition}
\newtheorem{thm}{Theorem}[section]
\newtheorem{dfn}[thm]{Definition}
\newtheorem{lem}[thm]{Lemma}
\newtheorem{prp}[thm]{Proposition}
\newtheorem{cor}[thm]{Corollary}
\newtheorem{rem}[thm]{Remark}
\author{Tim de Laat}
\thanks{TdL is supported by the Deutsche Forschungsgemeinschaft -- Project-ID 427320536 -- SFB 1442, as well as under Germany’s Excellence Strategy -- EXC 2044 -- 390685587, Mathematics Münster: Dynamics -- Geometry -- Structure.}
\address{Tim de Laat, 
\newline Westf\"alische Wilhelms-Universit\"at M\"unster, Mathematisches Institut
\newline Einsteinstra\ss{}e 62, 48149 M\"unster, Germany}
\email{tim.delaat@uni-muenster.de}
\author{Safoura Zadeh}
\thanks{SZ is supported through the ``Oberwolfach Leibniz Fellows'' program -- Project-ID 2105q, by Mathematisches Forschungsinstitut Oberwolfach in 2021.}
\address{Safoura Zadeh,
\newline Max-Planck-Institut f\"{u}r Mathematik, Vivatsgasse 7, 53111, Bonn, Germany}
\email{jsafoora@gmail.com}
\date{}
\title[Weak$^*$-continuity of invariant means]{Weak$^*$-continuity of invariant means on spaces of matrix coefficients}
\begin{document}

\begin{abstract}
With every locally compact group $G$, one can associate several interesting bi-invariant subspaces $X(G)$ of the weakly almost periodic functions $\mathrm{WAP}(G)$ on $G$, each of which captures parts of the representation theory of $G$. Under certain natural assumptions, such a space $X(G)$ carries a unique invariant mean and has a natural predual, and we view the weak$^*$-continuity of this mean as a rigidity property of $G$. Important examples of such spaces $X(G)$, which we study explicitly, are the algebra $M_{\mathrm{cb}}A_p(G)$ of $p$-completely bounded multipliers of the Fig\`a-Talamanca--Herz algebra $A_p(G)$ and the $p$-Fourier--Stieltjes algebra $B_p(G)$. In the setting of connected Lie groups $G$, we relate the weak$^*$-continuity of the mean on these spaces to structural properties of $G$. Our results generalise results of Bekka, Kaniuth, Lau and Schlichting.
\end{abstract}

\maketitle

\section{Introduction} \label{sec:introduction}
With every locally compact group $G$, which in this article we assume to be second countable and Hausdorff, one can associate several interesting function spaces consisting of matrix coefficients of different classes of representations. In the setting of unitary representations, notable examples, which were introduced by Eymard \cite{MR0228628}, are the Fourier--Stieltjes algebra $B(G)$, consisting of all matrix coefficients of unitary representations of $G$, and the Fourier algebra $A(G)$, consisting of all matrix coefficients of the left-regular representation of $G$.

A much more general class of representations is the class of uniformly bounded representations of $G$ on reflexive Banach spaces. Albeit being a very large class, these representations still have nice analytic properties. For instance, if $\pi \colon G \to \mathcal{B}(E)$ is such a representation, then $E$ decomposes as the direct sum of the closed subspace of $\pi(G)$-invariant vectors and a canonical invariant complement \cite{MR0002026} (see also \cite{MR3191647}, \cite{MR3579953}). The algebra of matrix coefficients of such representations coincides with the algebra $\mathrm{WAP}(G)$ of weakly almost periodic functions on $G$ \cite{MR2077804} (see also \cite{MR0628827}, \cite{MR3692904}).

As a well-known consequence of the Ryll-Nardzewski fixed point theorem, the space $\mathrm{WAP}(G)$ carries a unique (two-sided) invariant mean (see e.g.~\cite[\S 3.1]{MR0251549}). In \cite[Theorem 3.6]{MR3552017}, Haagerup, Knudby and the first-named author showed that under mild conditions, a bi-invariant linear subspace $X(G)$ of $\mathrm{WAP}(G)$ has a unique invariant mean, arising as the restriction of the mean on $\mathrm{WAP}(G)$. A particular case of this result, namely the case $B(G)$, was already known from \cite{MR0023243}. We recall some background on weakly almost periodic functions and invariant means in Section \ref{sec:preliminaries}.

The Fourier--Stieltjes algebra $B(G)$ can be identified with the dual space of the universal group $C^*$-algebra $C^*(G)$ of $G$. It is known that the mean on $B(G)$ is weak$^*$-continuous if and only if $G$ has Kazhdan's property (T) \cite{MR0634144}, \cite{MR0773186} (see also \cite{MR3552017}), which is a well-known rigidity property for groups with many applications (see \cite{MR2415834}).

This characterisation of property (T) raises the question under which conditions a space $X(G)$ as above has a predual. We establish sufficient conditions for this in Theorem \ref{thm:predual} and describe an explicit predual $Y(G)$ for these spaces. Under the conditions that a (sufficiently large) function space $X(G)$ has a (unique) invariant mean $m_X$ and a natural predual $Y(G)$, we can study the analogue of property (T) corresponding to the weak$^*$-continuity of $m_X$. More precisely, we say that $G$ has property (T$^*_X$) if the invariant mean $m_X$ on $X(G)$ is weak$^*$-continuous with respect to the weak$^*$-topology coming from the predual $Y(G)$ (see also Definition \ref{dfn:tstarx}). This idea generalises both property (T), corresponding to $X(G)=B(G)$, and property (T$^*$) from \cite{MR3552017}, corresponding to $X(G)$ being the space $M_{\mathrm{cb}}A(G)$ of completely bounded Fourier multipliers.

We study property (T$^*_X$) explicitly in the concrete cases of $X(G)$ being the algebra $M_{\mathrm{cb}}A_p(G)$ of $p$-completely bounded multipliers of the Fig\`a-Talamanca--Herz algebra $A_p(G)$, and $X(G)$ being the $p$-Fourier--Stieltjes algebra $B_p(G)$, with $1 < p < \infty$. We recall these algebras in Section \ref{sec:preliminaries}. For these $X(G)$ and connected Lie groups $G$, we relate property (T$^*_X$) to the structure of $G$; see Theorem \ref{thm:mcbapstructureliegroups} and Theorem \ref{thm:bpstructureliegroups}. In these theorems, the space $X(G) \cap C_0(G)$ is of special interest. Indeed, as can be expected, under certain natural assumptions on $G$, property (T$^*_X$) is equivalent to the fact that $X(G) \cap C_0(G)$ is weak$^*$-closed in $X(G)$. This follows directly from the weak$^*$-continuity of the mean in combination with a very general Howe--Moore type theorem due to Veech \cite{MR0550072}. Note that the space $X(G) \cap C_0(G)$ can be viewed as a generalisation of the Rajchman algebra $B(G) \cap C_0(G)$.

Theorem \ref{thm:bpstructureliegroups} generalises a result of Bekka, Kaniuth, Lau and Schlichting \cite[Theorem 2.7]{MR1401762}, where for connected Lie groups $G$, the weak$^*$-closedness of the space $B(G) \cap C_0(G)$ in $B(G)$ was related to the structure of $G$. Let us point out that the characterisation of property (T) in terms of the weak$^*$-continuity of the mean on $B(G)$ was not used there.

Our initial aim was to prove results along the lines of Theorem \ref{thm:mcbapstructureliegroups} and Theorem \ref{thm:bpstructureliegroups} for more abstract classes of function spaces/algebras (of matrix coefficients) that carry an invariant mean and have a natural predual. However, in order to relate the weak$^*$-continuity of the mean to the structure of groups, we would have needed to put too many additional assumptions on $X(G)$, which would not have justified the level of abstraction.

\section*{Acknowledgements}
The work leading to this paper started whilst the second-named author was visiting the Westf\"alische Wilhelms-Universit\"at M\"unster. She would like to gratefully thank the WWU M\"unster for the hospitality and excellent working conditions during the visit.

\section{Preliminaries} \label{sec:preliminaries}

\subsection{Invariant means}
Let $G$ be a locally compact group and $X(G)$ a linear subspace of $L^{\infty}(G)$ that is closed under complex conjugation and that contains all constant functions. A positive linear functional $m \colon X(G) \rightarrow \mathbb{C}$ satisfying $\|m\|=m(1)=1$ is said to be a mean on $X(G)$.

We assume that $X(G)$ is invariant under left and right translations, i.e.~if $\varphi \in X(G)$, then for all $g \in G$, the functions $L_g\varphi \colon h \mapsto \varphi(g^{-1}h)$ and $R_g\varphi \colon h \mapsto \varphi(hg)$ lie in $X(G)$. A mean $m$ is called left invariant (resp.~right invariant) if $m(L_g\varphi)=m(\varphi)$ (resp.~$m(R_g\varphi)=m(\varphi)$) for all $\varphi \in X(G)$ and $g \in G$. A mean that is both left and right invariant is called two-sided invariant. In this article, invariant means are always assumed to be two-sided invariant, unless explicitly mentioned otherwise.

\subsection{Weakly almost periodic functions}
For a locally compact group $G$, a function $\varphi \in C_b(G)$ is called weakly almost periodic if the left orbit $O(\varphi,L)=\{L_g\varphi \mid g \in G\}$ (or equivalently, the right orbit $O(\varphi,R)=\{R_g\varphi \mid g \in G\}$) is relatively weakly compact, i.e.~its closure is compact in the weak topology on $C_b(G)$. We denote the space of weakly almost periodic functions by $\mathrm{WAP}(G)$. This space is a closed bi-invariant and inversion invariant subalgebra of $C_b(G)$ that contains the constants.

It is well known (see e.g.~\cite[\S 3.1]{MR0251549}) that for every locally compact group $G$, there exists a unique left invariant mean $m$ on $\mathrm{WAP}(G)$. Given an element $\varphi \in \mathrm{WAP}(G)$, its mean $m(\varphi)$ is explicitly given by the unique constant in the weakly closed convex hull $C(\varphi,L)$ of $O(\varphi,L)$ (which equals the unique constant in the analogously defined set $C(\varphi,R)$). Moreover, the mean $m$ is right invariant and inversion invariant. For details on weakly almost periodic functions, see \cite{MR0251549}, \cite{MR0263963}.

\subsection{An invariant mean on subspaces of $\mathrm{WAP}(G)$}
It is known that certain classes of subspaces of $\mathrm{WAP}(G)$ also carry a unique invariant mean. The following result is \cite[Theorem 3.6]{MR3552017}.
\begin{thm} \label{thm:restrictionofmeanonwap}
Let $G$ be a locally compact group and $X(G)$ a linear subspace of $\mathrm{WAP}(G)$ that is closed under left translations and conjugation and that contains the constants. Then $X(G)$ carries a unique left invariant mean $m_X$, which is in fact the restriction of the mean on $\mathrm{WAP}(G)$. If additionally, $X(G)$ is closed under right translations, then $m_X$ is right invariant as well. Moreover, if $X(G)$ is closed under inversion, then $m_X$ is also invariant under inversion.
\end{thm}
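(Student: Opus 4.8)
The plan is to show that the restriction $m_X := m|_{X(G)}$ of the unique left invariant mean $m$ on $\mathrm{WAP}(G)$ is a left invariant mean on $X(G)$, and then to prove it is the \emph{only} one. For existence, I would simply observe that since $X(G) \subseteq \mathrm{WAP}(G)$ contains the constants and is closed under left translation and conjugation, the restriction $m_X$ is a positive linear functional with $\|m_X\| = m_X(1) = 1$ (the upper norm bound being inherited from $m$ and the lower bound forced by $m_X(1)=1$), and $m_X(L_g\varphi) = m(L_g\varphi) = m(\varphi) = m_X(\varphi)$ for all $g \in G$ and $\varphi \in X(G)$, using that $L_g\varphi \in X(G)$ and that $m$ is left invariant. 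So $m_X$ is a left invariant mean.

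The heart of the argument is uniqueness. Let $n$ be any left invariant mean on $X(G)$; I want to show $n = m_X$. Fix $\varphi \in X(G)$. By left invariance and linearity, $n$ takes the constant value $n(\varphi)$ on every convex combination of left translates of $\varphi$: indeed, $n\bigl(\sum_i \lambda_i L_{g_i}\varphi\bigr) = \sum_i \lambda_i n(\varphi) = n(\varphi)$ whenever $\lambda_i \ge 0$ and $\sum_i \lambda_i = 1$, and all such combinations lie in $X(G)$ because $X(G)$ is a left-translation-invariant subspace. The key structural input from Section~\ref{sec:preliminaries} is that the constant function $m(\varphi)\cdot 1$ lies in the weakly closed convex hull $C(\varphi,L)$ of the left orbit $O(\varphi,L)$; moreover $m(\varphi)\cdot 1 \in X(G)$ since $X(G)$ contains the constants.

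To pass from the convex hull to its weak closure, I would extend $n$ by the Hahn--Banach theorem to a bounded functional $\tilde n \in C_b(G)^*$ with $\|\tilde n\| = \|n\| = 1$. By the very definition of the weak topology $\sigma(C_b(G), C_b(G)^*)$, such $\tilde n$ is weakly continuous on $C_b(G)$. Choosing a net $(\psi_\alpha)$ of convex combinations of elements of $O(\varphi,L)$ converging weakly to $m(\varphi)\cdot 1$, we have $\tilde n(\psi_\alpha) = n(\psi_\alpha) = n(\varphi)$ for all $\alpha$ since $\psi_\alpha \in X(G)$, and $\tilde n(m(\varphi)\cdot 1) = n(m(\varphi)\cdot 1) = m(\varphi)\, n(1) = m(\varphi)$ since $m(\varphi)\cdot 1 \in X(G)$. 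Weak continuity of $\tilde n$ therefore gives $n(\varphi) = \lim_\alpha \tilde n(\psi_\alpha) = \tilde n(m(\varphi)\cdot 1) = m(\varphi) = m_X(\varphi)$. As $\varphi$ was arbitrary, $n = m_X$, proving uniqueness. The main obstacle here is precisely this passage to the weak limit off the convex hull onto its closure: this is where one needs that an arbitrary mean, though a priori only defined and bounded on $X(G)$, extends to a globally weakly continuous functional, and that both the approximants and the limiting constant already live in $X(G)$.

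Finally, for the additional invariance statements I would argue by the same restriction principle. If $X(G)$ is closed under right translations, then for $\varphi \in X(G)$ and $g \in G$ we have $R_g\varphi \in X(G)$ and $m_X(R_g\varphi) = m(R_g\varphi) = m(\varphi) = m_X(\varphi)$, using the right invariance of $m$ on $\mathrm{WAP}(G)$ recalled above; similarly, if $X(G)$ is closed under inversion, then $m_X(\check\varphi) = m(\check\varphi) = m(\varphi) = m_X(\varphi)$, where $\check\varphi(h) = \varphi(h^{-1})$, using the inversion invariance of $m$. Thus $m_X$ inherits whichever symmetries of $m$ are compatible with the closure properties of $X(G)$.
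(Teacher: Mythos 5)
The paper does not actually prove Theorem \ref{thm:restrictionofmeanonwap}; it imports it from \cite[Theorem 3.6]{MR3552017}. Your argument is correct and is essentially the standard proof: existence by restriction of the mean on $\mathrm{WAP}(G)$, and uniqueness via the Ryll-Nardzewski description of $m(\varphi)$ as the unique constant in the closed convex hull of the left orbit, noting that all convex combinations of translates and the limiting constant lie in $X(G)$. One small simplification is available at your ``heart of the argument'': since the convex hull of $O(\varphi,L)$ is convex, its weak closure in $C_b(G)$ coincides with its norm closure by Mazur's theorem, so you may choose the approximants $\psi_\alpha$ to converge to $m(\varphi)\cdot 1$ in the sup-norm and conclude using only the norm-continuity of $n$ on $X(G)$ (which follows from $\|n\|=1$), dispensing with the Hahn--Banach extension and any appeal to the weak topology on $C_b(G)$.
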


\subsection{The space $M_{\mathrm{cb}}A_p(G)$}
For $1 < p < \infty$, we denote by $M_{\mathrm{cb}}A_p(G)$ the space of $p$-completely bounded multipliers of the Fig\`a-Talamanca--Herz algebra $A_p(G)$ of $G$. We mainly work with the following characterisation of $p$-completely bounded multipliers from \cite{MR2606882}. Let $QSL^p$ denote the class of quotients of closed subspaces (or, equivalently, closed subspaces of quotients) of $L^p$-spaces.
\begin{prp}[{\cite[Theorem 8.3]{MR2606882}}] \label{prp:charpcbfm}
    Let $G$ be a locally compact group and $1 < p < \infty$. A function $\varphi \colon G \to \mathbb{C}$ is a $p$-completely bounded multiplier of $A_p(G)$ if there exists a space $E \in QSL^p$ and bounded, continuous maps $\alpha \colon G \to E$ and $\beta \colon G \to E^*$ such that for all $g,h \in G$,
\begin{equation} \label{eq:mcbapcharacterisation}
    \varphi(hg^{-1}) = \langle \beta(h),\alpha(g) \rangle.
\end{equation}
\end{prp}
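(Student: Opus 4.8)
The plan is to prove that the factorisation in \eqref{eq:mcbapcharacterisation} already \emph{forces} $\varphi$ to be a $p$-completely bounded multiplier, by realising the associated Herz--Schur multiplier as a sandwich of two elementary multiplication operators around an amplification by $E$. Recall that $A_p(G)^* = PM_p(G)$, the algebra of $p$-pseudomeasures inside $\mathcal{B}(L^p(G))$ via the regular representation $\lambda_p$, and that, in Daws' $p$-operator space framework, the multiplier $M_\varphi \colon A_p(G) \to A_p(G)$, $u \mapsto \varphi u$, is $p$-completely bounded precisely when its transpose $M_\varphi^t$ on $PM_p(G)$ is, with equal $p$-cb norm. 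This transpose is the Herz--Schur multiplier $\tilde M_\varphi$ with symbol $\Phi(h,g) = \varphi(hg^{-1})$, determined by $\tilde M_\varphi(\lambda_p(s)) = \varphi(s)\lambda_p(s)$. So it suffices to produce a $p$-completely bounded extension of $\tilde M_\varphi$ to all of $\mathcal{B}(L^p(G))$ and to control its $p$-cb norm.

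First I would build the two multiplication operators from the factorisation. Writing $L^p(G;E)$ for the $E$-valued $L^p$-space, set
\[
(M_\alpha \xi)(g) = \xi(g)\,\alpha(g), \qquad (M_\beta F)(h) = \langle \beta(h), F(h)\rangle,
\]
so that $M_\alpha \colon L^p(G) \to L^p(G;E)$ and $M_\beta \colon L^p(G;E) \to L^p(G)$ are bounded with $\|M_\alpha\| \le \sup_g \|\alpha(g)\|$ and $\|M_\beta\| \le \sup_h \|\beta(h)\|$, by boundedness of $\alpha$ and $\beta$. Letting $T \otimes \mathrm{id}_E$ denote the amplification of $T \in \mathcal{B}(L^p(G))$ to $L^p(G;E)$, a direct computation on a dense set (e.g.\ for $T = \lambda_p(f)$ with $f \in L^1(G)$, working with the kernel of $T$) gives
\[
\bigl(M_\beta (T \otimes \mathrm{id}_E) M_\alpha\, \xi\bigr)(h) = \int_G \varphi(hg^{-1})\, T(h,g)\, \xi(g)\, dg,
\]
where \eqref{eq:mcbapcharacterisation} is exactly what turns $\langle \beta(h), \alpha(g)\rangle$ into $\varphi(hg^{-1})$. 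Thus $\tilde M_\varphi = M_\beta \circ (\,\cdot \otimes \mathrm{id}_E) \circ M_\alpha$ on a weak$^*$-dense subspace, and the right-hand side defines a bounded map on all of $\mathcal{B}(L^p(G))$ extending $\tilde M_\varphi$.

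It then remains to upgrade boundedness to $p$-complete boundedness of each of the three factors. The amplification $T \mapsto T \otimes \mathrm{id}_E$ is a $p$-complete contraction precisely because $E \in QSL^p$: amplifying by a quotient of a subspace of an $L^p$-space is the defining operation of the canonical $p$-operator space structure on $\mathcal{B}(L^p(G))$, and this is the one point where the hypothesis $E \in QSL^p$ is genuinely used (it fails for an arbitrary Banach space $E$). The operators $M_\alpha$ and $M_\beta$, induced respectively by a bounded $E$-valued and a bounded $E^*$-valued function, are $p$-completely bounded with $p$-cb norms equal to their norms. Composing, $\tilde M_\varphi$ is $p$-completely bounded with $\|\tilde M_\varphi\|_{p\text{-cb}} \le \sup_g\|\alpha(g)\|\cdot\sup_h\|\beta(h)\|$; restricting to $PM_p(G)$ and passing back to the predual $A_p(G)$ shows $M_\varphi$ is $p$-completely bounded, i.e.\ $\varphi \in M_{\mathrm{cb}}A_p(G)$.

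The main obstacle I anticipate is exactly the verification in the last paragraph that the three constituent maps are $p$-completely bounded, and not merely bounded: this requires invoking the precise definition of the $p$-operator space structure on $\mathcal{B}(L^p(G))$ together with the functoriality of the $QSL^p$-amplification, and checking that the vector-valued multiplication operators are $p$-completely bounded. The formal kernel computation in the middle step is harmless in itself, but must be justified on a suitable dense subalgebra and then transported by weak$^*$-continuity; the substantive work lies entirely in the $p$-operator space bookkeeping, where the assumption $E \in QSL^p$ is indispensable.
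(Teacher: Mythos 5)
The paper does not actually prove this proposition: it is imported verbatim from Daws \cite[Theorem 8.3]{MR2606882}, so there is no in-paper argument to compare yours against. Judged on its own, your outline is a correct reconstruction of the standard proof of the stated (sufficiency) direction, and it follows the same mechanism as the cited source: pass from the multiplier $M_\varphi$ on $A_p(G)$ to the weak$^*$-continuous map on $PM_p(G)=A_p(G)^*$ determined by $\lambda_p(s)\mapsto\varphi(s)\lambda_p(s)$, and exhibit a $p$-completely bounded extension of it to $\mathcal{B}(L^p(G))$ as the sandwich $T\mapsto M_\beta\,(T\otimes\mathrm{id}_E)\,M_\alpha$. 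Your kernel computation is right, and you correctly isolate where the hypothesis $E\in QSL^p$ enters: the amplification $T\mapsto T\otimes\mathrm{id}_E$ is a ($p$-completely) contractive map into $\mathcal{B}(L^p(G;E))$ because $L^p(G;L^p(\mu))\cong L^p(G\times\mu)$ and boundedness of amplifications passes to closed subspaces and quotients; this genuinely fails for arbitrary $E$.

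Two points you flag as ``bookkeeping'' deserve to be named explicitly if this were written out, since they are the only places where something could go wrong. First, the sandwich map must be checked to preserve $PM_p(G)$ and to be weak$^*$-continuous there (verify it on $\lambda_p(f)$, $f\in L^1(G)$, and use boundedness plus weak$^*$-continuity of $T\mapsto T\otimes\mathrm{id}_E$ on bounded sets); only then does it dualise to a map on $A_p(G)$, and the identification of $p$-cb maps on $A_p(G)$ with weak$^*$-continuous $p$-cb maps on $PM_p(G)$ is more delicate for $p\neq 2$ than in ordinary operator space theory --- this is exactly what Daws' $p$-operator space duality machinery supplies. Second, the assertion that $M_\alpha$ and $M_\beta$ have $p$-cb norm equal to their operator norm again rests on their domains and codomains being $QSL^p$-spaces with their canonical $p$-operator space structures. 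Neither point is a gap in substance, but both are theorems of \cite{MR2606882} rather than formalities.
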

Given $\varphi \in M_{\mathrm{cb}}A_p(G)$, its norm $\|\varphi\|_{M_{\mathrm{cb}}A_p(G)}$ is given by the infimum of the numbers $\|\alpha\|_{\infty}\|\beta\|_{\infty}$ over all choices of $E$, $\alpha$ and $\beta$ for which \eqref{eq:mcbapcharacterisation} holds. The space $M_{\mathrm{cb}}A_2(G)$ corresponds with the usual completely bounded multipliers of the Fourier algebra $A(G)$.

It is known that for $1 < p < \infty$, the space $M_{\mathrm{cb}}A_p(G)$ is a subalgebra of $\mathrm{WAP}(G)$ \cite{MR1373647} (see also \cite[Proposition 3.3]{MR3552017} for an explicit proof in the case $p=2$). Also, for $1 < p \leq q \leq 2$ or $2 \leq q \leq p < \infty$, the algebra $M_{\mathrm{cb}}A_q(G)$ embeds contractively into $M_{\mathrm{cb}}A_p(G)$ \cite[Proposition 6.1]{MR2652178}.

In general, a Fourier multiplier that is not completely bounded is not necessarily a weakly almost periodic function (see \cite{Bozejko}), which is why we cannot consider the space $MA_p(G)$ of such multipliers in the setting of this article.

\subsection{The space $B_p(G)$}
For a locally compact group $G$ and $1 < p < \infty$, let $\mathrm{Rep}_p(G)$ denote the collection of all (isometric equivalence classes of) isometric representations of $G$ on a $QSL^p$-space. Examples of elements in $\mathrm{Rep}_p(G)$ are the trivial representation (on every $QSL^p$-space) and the left-regular representation $\lambda_p \colon G \to \mathcal{B}(L^p(G))$.

Let $1 < p < \infty$. The $p$-Fourier--Stieltjes algebra $B_p(G)$ is defined as the space of matrix coefficients of (isometric equivalence classes of) isometric representations on a $QSL^p$-space, i.e.~functions of the form
\begin{equation} \label{eq:bpcharacterisation}
    g \mapsto \langle \eta, \pi(g)\xi \rangle,
\end{equation}
where $\pi \colon G \to \mathcal{B}(E)$ is a representation in $\mathrm{Rep}_p(G)$ and $\xi \in E$, $\eta \in E^*$. The space $B_p(G)$ carries a natural norm given by the infimum of the numbers $\|\xi\|\|\eta\|$ over all representations $\pi$ in $\mathrm{Rep}_p(G)$ and $\xi \in E$, $\eta \in E^*$ such that \eqref{eq:bpcharacterisation} holds.

This definition of $B_p(G)$ is due to Runde \cite{MR2196641}, who showed that $B_p(G)$ is a Banach algebra. However, the conventions used above are slightly different, in order to be better suitable to the purposes of this article.

It is known that for $1 < p \leq q \leq 2$ or $2 \leq q \leq p < \infty$, the algebra $B_q(G)$ embeds contractively into $B_p(G)$ \cite{MR2196641}.\\

For fixed $p$, we have the following norm-decreasing inclusions among the aforementioned spaces:
\[
    A_p(G) \subset B_p(G) \subset M_{\mathrm{cb}}A_p(G) \subset \mathrm{WAP}(G) \subset L^\infty(G).
\]

\section{A predual of $X(G)$}
Let $G$ be a locally compact group, and let $\mathrm{WAP}(G)$ be the algebra of weakly almost periodic functions on $G$. In what follows, let $X(G)$ be a bi-invariant linear subspace of $\mathrm{WAP}(G)$ that is closed under complex conjugation and that contains the constants. Suppose that $X(G)$ has a natural norm $\|\cdot\|_X$, that the Fourier--Stieltjes algebra $B(G)$ embeds contractively into $X(G)$ and that $\|\cdot\|_{\infty} \leq \|\cdot\|_X \leq \|\cdot\|_B$. The assumption that $B(G)$ is contained in $X(G)$ guarantees that $X(G)$ ``contains the unitary \hyphenation{rep-resen-ta-tion}representation theory of $G$'' and that it is sufficiently large for our purposes. In particular, $X(G)$ is $\sigma(L^{\infty}(G),L^1(G))$-dense in $L^{\infty}(G)$ (i.e.~weak$^*$-dense with respect to the weak$^*$-topology on $L^{\infty}(G)$).

For $f \in L^1(G)$, set
\[
        \|f\|_Y:=\sup\left\{ \; \bigg\lvert \int_G f(g)\varphi(g)dg \; \bigg\rvert \; \big\vert \; \varphi \in X(G), \; \|\varphi\|_{X} \leq 1 \, \right\}.
\]
Then $\|\cdot\|_Y$ defines a norm on $L^1(G)$, and we denote by $Y(G)$ the completion of $L^1(G)$ with respect to $\|\cdot\|_Y$. The following result provides a criterion for $X(G)$ to have a predual.
\begin{thm} \label{thm:predual}
Let $X(G)$ be as above. Then the following are equivalent:

\begin{itemize}
\item[(i)] The spaces $X(G)$ and $Y(G)^*$ are isometrically isomorphic. 

\item[(ii)] The unit ball $(X(G))_1$ of $X(G)$ is $\sigma(L^{\infty}(G),L^1(G))$-closed.
\end{itemize}

Moreover, in this case, every bounded linear functional $\alpha \colon Y(G) \to \mathbb{C}$ is of the form
\begin{equation}
        \alpha(f) = \int_G f(g)\varphi(g)dg, \qquad f \in L^1(G),
\end{equation}
for some $\varphi \in X(G)$, and $\|\alpha\|=\|\varphi\|_{X}$.
\end{thm}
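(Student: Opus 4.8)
The plan is to realise the abstract dual $Y(G)^*$ as a concrete space of functions inside $L^\infty(G)$ and to identify its unit ball, via the bipolar theorem, with the $\sigma(L^\infty(G),L^1(G))$-closure of $(X(G))_1$; the equivalence then reduces to the tautology that a convex balanced set equals its bipolar precisely when it is weak$^*$-closed. First I would record the basic pairing. Each $\varphi \in X(G)$ defines a functional $f \mapsto \int_G f\varphi$ on $L^1(G)$, and by the very definition of $\|\cdot\|_Y$ one has $\lvert \int_G f\varphi \rvert \le \|f\|_Y \|\varphi\|_X$; hence this functional extends to $Y(G)$ and yields a contractive, injective map $\Phi \colon X(G) \to Y(G)^*$ (injectivity using that elements of $X(G) \subseteq \mathrm{WAP}(G)$ are continuous). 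Dually, the assignment $f \mapsto (\varphi \mapsto \int_G f\varphi)$ realises $(L^1(G),\|\cdot\|_Y)$ isometrically inside $X(G)^*$, so that $Y(G)$ is isometrically the closure of $L^1(G)$ there.

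The heart of the matter is to describe $Y(G)^*$ concretely. Since $\|\varphi\|_\infty \le \|\varphi\|_X$, the unit ball $(X(G))_1$ is contained in the unit ball of $L^\infty(G)$, whence $\|f\|_Y \le \|f\|_1$ for all $f \in L^1(G)$. Consequently every $\alpha \in Y(G)^*$ restricts to a $\|\cdot\|_1$-bounded functional on $L^1(G)$, i.e.~is represented by a unique $\varphi_\alpha \in L^\infty(G)$ via $\alpha(f) = \int_G f\varphi_\alpha$ with $\|\varphi_\alpha\|_\infty \le \|\alpha\|$; as $L^1(G)$ is dense in $Y(G)$, the map $\alpha \mapsto \varphi_\alpha$ is an injection $Y(G)^* \hookrightarrow L^\infty(G)$. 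Writing $B := (X(G))_1$ and taking polars in the $(L^1(G),L^\infty(G))$ duality, I would then observe that the unit ball of $(L^1(G),\|\cdot\|_Y)$ is exactly $B^\circ$, so that $\|\alpha\| \le 1$ holds iff $\lvert \int_G f\varphi_\alpha \rvert \le 1$ for all $f \in B^\circ$, i.e.~iff $\varphi_\alpha \in B^{\circ\circ}$. A short argument, extending an arbitrary $\varphi \in B^{\circ\circ}$ back to a functional on $Y(G)$, shows this correspondence is onto. Thus $\alpha \mapsto \varphi_\alpha$ identifies $Y(G)^*$ isometrically with the subspace of $L^\infty(G)$ whose unit ball is $B^{\circ\circ}$, and under this identification $\Phi$ is simply the inclusion $X(G) \hookrightarrow Y(G)^*$, $\varphi \mapsto \varphi$.

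By the bipolar theorem, $B^{\circ\circ}$ is the $\sigma(L^\infty(G),L^1(G))$-closure of the convex balanced set $B=(X(G))_1$. The equivalence follows at once: if (ii) holds, then $B = B^{\circ\circ}$, so $\Phi$ maps $X(G)$ onto $Y(G)^*$ and preserves the norm (both unit balls being $B$), which is (i); conversely, if $\Phi$ is an isometric isomorphism, then $X(G) = Y(G)^*$ as sets with equal norms, forcing $B = B^{\circ\circ}$, so $B$ is weak$^*$-closed, which is (ii). The ``moreover'' statement is precisely the concrete description of $Y(G)^*$ above: in this case $\varphi_\alpha \in B^{\circ\circ} = B \subseteq X(G)$, so $\alpha(f) = \int_G f\varphi_\alpha$ with $\varphi_\alpha \in X(G)$, and $\|\alpha\| = \|\varphi_\alpha\|_X$ because the gauge of $B^{\circ\circ}=B$ is exactly $\|\cdot\|_X$.

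The main obstacle I expect is the concrete identification of $Y(G)^*$ with a subspace of $L^\infty(G)$ together with the precise computation of its unit ball as the bipolar $B^{\circ\circ}$: one must use $\|\cdot\|_Y \le \|\cdot\|_1$ to pass from abstract functionals on $Y(G)$ to essentially bounded functions, verify that the correspondence $\alpha \mapsto \varphi_\alpha$ is surjective onto $B^{\circ\circ}$, and take polars in the correct duality so that the bipolar theorem delivers exactly the weak$^*$-closure of $(X(G))_1$. Once this dictionary is in place, both the equivalence and the norm identity are formal.
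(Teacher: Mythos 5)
Your proposal is correct and follows essentially the same route as the paper: both hinge on identifying the $\|\cdot\|_Y$-unit ball of $L^1(G)$ with the (pre)polar of $(X(G))_1$ in the $(L^1,L^\infty)$ duality and invoking the bipolar theorem to show the natural contraction $X(G)\to Y(G)^*$ is a surjective isometry. The only cosmetic difference is in (i) $\Rightarrow$ (ii), where the paper uses weak$^*$-continuity of the adjoint of $L^1(G)\to Y(G)$ and weak$^*$-compactness of the dual unit ball, while you read off $(X(G))_1=B^{\circ\circ}$ directly from your concrete description of $Y(G)^*$; both are valid.
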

\begin{proof}
      (i) $\implies$ (ii): Suppose that $Y(G)^*$ and $X(G)$ are isometrically isomorphic. There is a contractive map $\iota \colon L^1(G)\to Y(G)$.  Its adjoint $\iota^* \colon X(G)\to L^\infty(G)$ is the inclusion map, which is weak$^*$-weak$^*$-continuous. Hence, it maps the unit ball of $X(G)$ to a weak$^*$-compact subset of $L^\infty(G)$.
      
      (ii) $\implies$ (i): First note that for every $\varphi \in X(G)$, the linear functional 
      \[
      \alpha_{\varphi} \colon L^1(G) \to \mathbb{C}, \qquad f \mapsto \int_G f(g)\varphi(g)dg
      \]
      uniquely extends to a bounded linear functional on $Y(G)$, and we have that $\|\alpha_{\varphi}\| \leq \|\varphi\|_{X}$. Let $\Psi:X(G)\to Y(G)^*$ denote the contractive linear map given by $\Psi(\varphi) = \alpha_{\varphi}$. We show that $\Psi$ is, in fact, a surjective isometry. 
      
      Let $\alpha \in Y(G)^*$ with $\|\alpha\|=1$. Since $X(G)$ embeds contractively into $L^\infty(G)$, we observe that for every $f\in L^1(G)$, we have $\|f\|_Y\leq\|f\|_1$, and therefore, $\alpha' := \alpha \big\vert_{L^1(G)}$ is a bounded linear functional on $L^1(G)$. Hence, there exists a $\varphi \in L^{\infty}(G)$ such that 
      \[
        \alpha'(f) = \int_G f(g)\varphi(g)dg, \qquad f \in L^1(G).
      \] 
      We now show that $\varphi\in (X(G))_1$, by means of a version of the bipolar theorem. We consider $(X(G))_1$ as a subset of $L^\infty(G)$ equipped with the $\sigma(L^\infty(G),L^1(G))$-topology. By definition of $\|\cdot\|_{Y}$, we observe that the prepolar ${}^{\circ}(X(G))_1$ of $(X(G))_1$ is given by
      \[
        {}^{\circ}(X(G))_1=\{ f\in L^1(G) \mid \|f\|_{Y}\leq1\}.
      \]
      Since $\|\alpha\|=1$, we have that for every $f\in {}^{\circ}(X(G))_1$,
    \[
  \bigg\lvert \int f(g)\varphi(g)dg \; \bigg\rvert = \lvert\alpha^{\prime}(f)\rvert=\lvert\alpha(f)\rvert\leq1.
    \]
    Therefore, $\varphi$ belongs to the polar of ${}^{\circ}(X(G))_1$. Since $(X(G))_1$ is a convex balanced subset of $L^{\infty}(G)$ that is $\sigma(L^\infty(G),L^1(G))$-closed by assumption, the polar of ${}^{\circ}(X(G))_1$ coincides with $(X(G))_1$ by the bipolar theorem (see \cite[Corollary V.1.9]{MR1070713}), so $\varphi$ belongs to $(X(G))_1$. Moreover, $\Psi(\varphi)=\alpha$ and $\|\Psi(\varphi)\|\leq\|\varphi\|_X \leq 1 = \|\alpha\|$. This implies that $\Psi$ is a surjective isometry, as desired. 
\end{proof}

\begin{rem}
\

\begin{itemize}
\item[(i)] Note that in Theorem \ref{thm:predual}, the existence of a predual does not come for free. For example, $\mathrm{WAP}(G)$ itself does not have a predual in general. Indeed, the algebra $\mathrm{WAP}(G)$ is a $C^*$-algebra. If it would have a predual, then by Sakai's theorem, it would be a von Neumann algebra, which is in general not the case.

\item[(ii)] A subspace $X(G)$ as in Theorem \ref{thm:predual} may have several preduals that are not isometrically isomorphic. An easy example of this is the Fourier--Stieltjes algebra $B(\mathbb{T})$ of the circle group $\mathbb{T}$, which in fact coincides with the completely bounded Fourier multipliers $M_{\mathrm{cb}}A(\mathbb{T})$. It is known that
\[
    B(\mathbb{T}) \cong C^*(\mathbb{T})^* \cong c_0(\mathbb{Z})^* \cong \ell^1(\mathbb{Z}).
\]
The space $\ell^1(\mathbb{Z})$ is known to have many preduals different from $c_0(\mathbb{Z})$.

\item[(iii)] In the case that $X(G)$ is $M_{\mathrm{cb}}A(G)$, the predual from Theorem \ref{thm:predual} coincides with the predual described for this space in \cite[Proposition 1.10]{MR0784292}. Miao generalised this to the $p$-completely bounded multipliers $M_{\mathrm{cb}}A_p(G)$ \cite{MR2040919}, \cite{MR2457409}, \cite{Miao}. Indeed, he proved that the space $Q_{p,\mathrm{cb}}(G)$ defined as the completion of $L^1(G)$ with respect to the norm
\[
\|f\|_{Q_{p,\mathrm{cb}}}=\sup\bigg\{\bigg\lvert \int_G f(g)\varphi(g)dg \; \bigg\rvert\ \vert\ \varphi\in (M_{\mathrm{cb}}A_p(G))_1\bigg\}
\]
is a predual of $M_{\mathrm{cb}}A_p(G)$.

As a matter of fact, the predual of $M_{\mathrm{cb}}A_p(G)$ already occurred earlier in the literature. To see this, note that in \cite[Theorem 8.6]{MR2606882}, Daws establishes an isometric isomorphism
\begin{equation} \label{eq:FS}
HS_p(G) \cong M_{\mathrm{cb}}A_p(G),
\end{equation}
where $HS_p(G)$ is the Banach space of ``$p$-Herz--Schur multipliers'' as defined by Herz in \cite{MR425511}. (In the notation of \cite{MR425511}, however, the space $HS_p(G)$ is denoted by $B_p(G)$.) For the space $HS_p(G)$, Herz had already constructed a predual. Indeed, in \cite{MR425511}, Herz introduces a Banach space $QF_p(G)$ and a contractive map $Q \colon L^1(G)\to QF_p(G)$ with dense range \cite[Proposition 1]{MR425511}. Then, in \cite[Proposition 2]{MR425511}, he shows that $HS_p(G)$ coincides with the range of the adjoint map $Q^*$, and that $Q^*$ induces an isometric isomorphism 
\[
	HS_p(G) \cong QF_p(G)^*.
\]
Combined with \eqref{eq:FS}, this yields the desired predual.

\item[(iv)] For $B_p(G)$, the predual of Theorem \ref{thm:predual} coincides with the predual of this space described in \cite[Theorem 6.6]{MR2196641}.
\end{itemize}
\end{rem}

\section{Property (T$^*_X$)} \label{sec:weakstarcontinuity}
In this section, we study rigidity properties formulated in terms of the weak$^*$-continuity of invariant means on appropriate function spaces of $G$. 

Let $G$ be a locally compact group, and let $X(G)$ be as in Theorem \ref{thm:predual}. In particular, $X(G)$ has a unique invariant mean $m_X$ and the space $Y(G)$ as defined in Theorem \ref{thm:predual} is a predual of $X(G)$.
\begin{dfn} \label{dfn:tstarx}
        A locally compact group $G$ has property (T$^*_X$) if $m_X$ is $\sigma(X(G),Y(G))$-continuous.
\end{dfn}
When it is clear which space $X(G)$ we consider, we often just use the terminology ``weak$^*$-topology'' instead of $\sigma(X(G),Y(G))$-topology. Note that this topology depends on the chosen predual $Y(G)$. Unless explicitly stated otherwise, we always consider the natural predual from Theorem \ref{thm:predual}.

As mentioned in Section \ref{sec:introduction}, for $X(G)=B(G)$, this property corresponds to Kazhdan's property (T) (see \cite{MR0634144}, \cite{MR0773186}), and for $X(G)=M_{\mathrm{cb}}A(G)$, this property corresponds to property (T$^*$) of Haagerup, Knudby and the first-named author (see \cite{MR3552017}).

The following proposition shows that if $X(G)$ is as in Theorem \ref{thm:predual} (in particular we assume that $X(G)$ ``includes the unitary representation theory'' of $G$, in the sense that $B(G)$ embeds contractively into $X(G)$), then property (T$^*_X$) is a strengthening of property (T).
\begin{prp} \label{prp:Tstarcomparison}
    Let $X(G)$ be a subspace of $\mathrm{WAP}(G)$ satisfying the conditions of Theorem \ref{thm:predual}. If $G$ has property (T$^*_X$), then $G$ has property (T).
\end{prp}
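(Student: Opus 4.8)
The plan is to reduce property (T$^*_X$) to the weak$^*$-continuity of the invariant mean on the Fourier--Stieltjes algebra $B(G)$, which is equivalent to property (T) by the results of \cite{MR0634144} and \cite{MR0773186}. Recall that $B(G) \cong C^*(G)^*$, so the relevant weak$^*$-topology on $B(G)$ is $\sigma(B(G),C^*(G))$; write $m_B$ for the unique invariant mean on $B(G)$.

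First I would compare the two preduals $Y(G)$ and $C^*(G)$. Since $B(G)$ embeds contractively into $X(G)$, we have $(B(G))_1 \subseteq (X(G))_1$, because $\|\varphi\|_X \leq \|\varphi\|_B$. For $f \in L^1(G)$ one has $\|f\|_{C^*(G)} = \sup\{ |\int_G f\varphi| : \varphi \in (B(G))_1 \}$ by Hahn--Banach duality, while $\|f\|_Y = \sup\{ |\int_G f\varphi| : \varphi \in (X(G))_1 \}$ by definition; the inclusion of unit balls therefore gives $\|f\|_{C^*(G)} \leq \|f\|_Y$. Hence the identity on $L^1(G)$ extends to a contractive map $\theta \colon Y(G) \to C^*(G)$. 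Unwinding the integration pairing, the adjoint $\theta^* \colon B(G) \to X(G)$ is precisely the contractive inclusion, and, being an adjoint, $\theta^*$ is automatically $\sigma(B(G),C^*(G))$-to-$\sigma(X(G),Y(G))$-continuous.

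Next I would invoke uniqueness of the invariant mean. Since $B(G) \subseteq X(G) \subseteq \mathrm{WAP}(G)$ and, by Theorem \ref{thm:restrictionofmeanonwap}, both $m_B$ and $m_X$ are restrictions of the mean on $\mathrm{WAP}(G)$, it follows that $m_B = m_X \circ \theta^*$, i.e.\ $m_X$ restricts to $m_B$ on $B(G)$. Assuming that $G$ has property (T$^*_X$), the functional $m_X$ is $\sigma(X(G),Y(G))$-continuous; composing with the weak$^*$-weak$^*$-continuous map $\theta^*$ shows that $m_B = m_X \circ \theta^*$ is $\sigma(B(G),C^*(G))$-continuous. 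By the characterisation of property (T) in terms of the weak$^*$-continuity of $m_B$, the group $G$ then has property (T).

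The only genuine content is the construction of the comparison map $\theta$ together with the identification of $\theta^*$ as the inclusion $B(G) \hookrightarrow X(G)$; once this is in place the transfer of weak$^*$-continuity is formal. A point to keep in mind is that property (T$^*_X$) presupposes the predual relation $X(G) = Y(G)^*$, so the weak$^*$-continuity of $m_X$ is exactly the statement that $m_X$ lies in the image of $Y(G)$ in $X(G)^*$; one could equivalently argue by pushing the representing element of $m_X$ forward through $\theta$ to obtain a representing element of $m_B$ in $C^*(G)$.
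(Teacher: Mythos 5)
Your argument is correct and is essentially the paper's own proof: both construct the contraction $Y(G)\to C^*(G)$ extending the identity on $L^1(G)$, identify its adjoint with the inclusion $B(G)\hookrightarrow X(G)$, and transfer weak$^*$-continuity of $m_X$ to $m_B=m_X\circ\iota$. Your additional justifications (the unit-ball inclusion giving $\|f\|_{C^*(G)}\leq\|f\|_Y$, and Theorem \ref{thm:restrictionofmeanonwap} giving $m_B=m_X\circ\iota$) are exactly the details the paper leaves implicit.
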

\begin{proof}
        The identity map $\mathrm{id} \colon L^1(G) \to L^1(G)$ extends to a linear contraction $Y(G) \to C^*(G)$. Its adjoint is the inclusion map $\iota \colon B(G) \to X(G)$, and the map $\iota$ is weak$^*$-weak$^*$-continuous.
        
        Suppose that the mean $m_X$ on $X(G)$ is $\sigma(X(G),Y(G))$-continuous. Since the mean $m_B$ on $B(G)$ coincides with the composition $m_X \circ \iota$, the mean $m_B$ is $\sigma(B(G),C^*(G))$-continuous.
\end{proof}
\begin{rem}
By the results recalled in Section \ref{sec:preliminaries}, we know that for every $1<p<\infty$, the space $B(G)$ embeds contractively into $B_p(G)$, and $B_p(G)$ embeds contractively into $M_{\mathrm{cb}}A_p(G)$. Hence, the assumptions of Proposition \ref{prp:Tstarcomparison} hold for the spaces $X(G)=B_p(G)$ and $X(G)=M_{\mathrm{cb}}A_p(G)$.
\end{rem}
It is clear that if $X(G)$ is as in Theorem \ref{thm:predual} and $G$ is a compact group, then $G$ has property (T$^*_X$). Indeed, the map $\varphi \mapsto \langle \varphi , 1 \rangle$ defines the weak$^*$-continuous unique invariant mean on $X(G)$.

In case $G$ is a non-compact group, it is in general difficult to show that the invariant mean on $X(G)$ is weak$^*$-continuous. Indeed, establishing (a strengthening) of property (T) for a group $G$ is usually hard. We study specific cases of $X(G)$ in the setting of Lie groups $G$ in the next sections.

\section{The mean on $M_{\mathrm{cb}}A_p(G)$ and property (T$^*_{M_{\mathrm{cb}}A_p}$)}
In this section, we consider the case of $X(G)$ being the space $M_{\mathrm{cb}}A_p(G)$ of $p$-completely bounded multipliers of $A_p(G)$ (with $1 < p < \infty$), and we study property (T$^*_{M_{\mathrm{cb}}A_p}$) and its permanence properties. We write $Q_{p,\mathrm{cb}}(G)$ for the predual of $M_{\mathrm{cb}}A_p(G)$ as described in Theorem \ref{thm:predual}. Some of the arguments in this section are modifications of results from \cite{MR3552017}.

The following generalises \cite[Lemma 5.8]{MR3552017}. The proof follows mutatis mutandis from the proof given there.
\begin{lem} \label{lem:grouphomomorphism}
Let $G$ and $H$ be locally compact groups and $\rho \colon H\to G$ a continuous group homomorphism, and let $1 < p < \infty$. If $u\in C_c(G)$ is a non-negative function with $\|u\|_1 = 1$ and $u^*=u$ (where $u^*(g)=\overline{u(g^{-1})}\Delta(g^{-1})$), then the linear map $T\colon M_{\mathrm{cb}}A_p(G)\to M_{\mathrm{cb}}A_p(H)$ defined by $\varphi \mapsto (u*\varphi)\circ\rho$ is weak$^*$-weak$^*$-continuous.
\end{lem}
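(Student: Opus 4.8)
The plan is to exhibit $T$ as the adjoint of a bounded linear map between the preduals, since a map between dual Banach spaces is weak$^*$-weak$^*$-continuous precisely when it arises as such an adjoint. Identifying $M_{\mathrm{cb}}A_p(G) = Q_{p,\mathrm{cb}}(G)^*$ and $M_{\mathrm{cb}}A_p(H) = Q_{p,\mathrm{cb}}(H)^*$ via Theorem \ref{thm:predual}, I would construct a bounded map $S \colon Q_{p,\mathrm{cb}}(H) \to Q_{p,\mathrm{cb}}(G)$ and show that $T = S^*$.

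First I would check that $T$ is well-defined and bounded. Using the characterisation of Proposition \ref{prp:charpcbfm}, if $\varphi(hg^{-1}) = \langle\beta(h),\alpha(g)\rangle$ with $\alpha \colon G \to E$, $\beta \colon G \to E^*$ and $E \in QSL^p$, then $(u*\varphi)(hg^{-1}) = \langle \int_G u(y)\beta(y^{-1}h)\,dy,\ \alpha(g)\rangle$, so $u*\varphi \in M_{\mathrm{cb}}A_p(G)$ with norm at most $\|u\|_1\|\varphi\|_{M_{\mathrm{cb}}A_p(G)}$; and since $\rho$ is a homomorphism, $(\psi\circ\rho)(hg^{-1}) = \langle \beta(\rho(h)),\alpha(\rho(g))\rangle$, so pullback along $\rho$ is a contraction $M_{\mathrm{cb}}A_p(G)\to M_{\mathrm{cb}}A_p(H)$. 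Hence $T$ is bounded with $\|T\| \le \|u\|_1 = 1$.

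Next I would produce the pre-adjoint by unwinding the pairing. For $f \in C_c(H)$ and $\varphi \in M_{\mathrm{cb}}A_p(G)$, writing $\langle T\varphi, f\rangle = \int_H\int_G u(y)\varphi(y^{-1}\rho(h))\,dy\,f(h)\,dh$, the change of variables $x = y^{-1}\rho(h)$ together with Fubini (both justified by the compact supports and the boundedness of $\varphi$) gives $\langle T\varphi, f\rangle = \int_G \varphi(x)(Sf)(x)\,dx$, where
\[
(Sf)(x) = \Delta_G(x)^{-1}\int_H f(h)\,u(\rho(h)x^{-1})\,dh.
\]
The decisive point is that $Sf$ lies in $L^1(G) \subseteq Q_{p,\mathrm{cb}}(G)$: since $u \in C_c(G)$ and $\rho(\mathrm{supp}\,f)$ is compact, $Sf$ is continuous with compact support. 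The estimate $\|Sf\|_{Q_{p,\mathrm{cb}}(G)} = \sup_{\|\varphi\|\le 1}|\langle T\varphi, f\rangle| \le \|T\|\,\|f\|_{Q_{p,\mathrm{cb}}(H)}$ then follows at once from the boundedness of $T$ and the definition of the predual norm, so $S$ extends to a bounded map $Q_{p,\mathrm{cb}}(H)\to Q_{p,\mathrm{cb}}(G)$. By density of $L^1(H)$ and continuity, $\langle T\varphi, f\rangle = \langle\varphi, Sf\rangle$ for all $f$, i.e.\ $T = S^*$, which is weak$^*$-weak$^*$-continuous.

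The main obstacle to anticipate is precisely that the bare pullback $\psi\mapsto\psi\circ\rho$ need not be weak$^*$-continuous on its own: its formal pre-adjoint would be a pushforward of measures along $\rho$, which does not map $L^1(H)$ into $L^1(G)$ when $\rho$ is far from proper. The role of convolving first with the compactly supported $u$ is exactly to \emph{regularise} this, as it forces $Sf$ to have compact support and hence to lie in $L^1(G)$. I would therefore take care to verify the support and continuity of $Sf$ and the legitimacy of the change of variables and Fubini step, since these carry the real content; the passage from $p=2$ to general $p$ changes nothing beyond replacing the Hilbert-space coefficient representation by the $QSL^p$ representation of Proposition \ref{prp:charpcbfm}.
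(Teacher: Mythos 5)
Your argument is correct and is essentially the paper's own proof, which is given only by reference to the $p=2$ case (\cite[Lemma 5.8]{MR3552017}): one establishes boundedness of $T$ via the coefficient representation of Proposition \ref{prp:charpcbfm}, and then constructs the pre-adjoint $S$ on $C_c(H)$, where the compact support of $u$ forces $Sf$ into $C_c(G)\subseteq L^1(G)\subseteq Q_{p,\mathrm{cb}}(G)$ and the bound $\|Sf\|_{Q_{p,\mathrm{cb}}(G)}\leq \|T\|\,\|f\|_{Q_{p,\mathrm{cb}}(H)}$ follows from the definition of the predual norm. The steps you flag as needing care (Fubini, the change of variables producing the modular factor, continuity of the vector-valued integral defining $u*\varphi$) are routine and all check out.
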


\begin{prp} \label{prp:denseimage}
Let $\rho \colon H \rightarrow G$ be a continuous group homomorphism with dense image, and let $1 < p < \infty$. If $H$ has property (T$^*_{M_{\mathrm{cb}}A_p}$), then so has $G$.
\end{prp}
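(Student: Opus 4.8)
The plan is to transfer the weak$^*$-continuity of the mean from $H$ to $G$ by means of the homomorphism $\rho$, using Lemma \ref{lem:grouphomomorphism} to produce a weak$^*$-continuous map in the right direction. First I would fix a function $u \in C_c(G)$ that is non-negative, self-adjoint ($u^* = u$) and normalised so that $\|u\|_1 = 1$; such a function exists because $G$ is locally compact. By Lemma \ref{lem:grouphomomorphism}, the associated map $T \colon M_{\mathrm{cb}}A_p(G) \to M_{\mathrm{cb}}A_p(H)$, $\varphi \mapsto (u * \varphi) \circ \rho$, is weak$^*$-weak$^*$-continuous, and it is automatically contractive. The strategy is then to relate the mean on $G$ to the mean on $H$ via $T$: concretely, I expect that $m_H \circ T = m_G$ as functionals on $M_{\mathrm{cb}}A_p(G)$, so that weak$^*$-continuity of $m_H$ together with weak$^*$-continuity of $T$ forces weak$^*$-continuity of $m_G = m_H \circ T$.

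The key computation is therefore the identity $m_H(T\varphi) = m_G(\varphi)$ for all $\varphi \in M_{\mathrm{cb}}A_p(G)$. I would break this into two observations. The first is that convolution by $u$ does not change the mean, i.e.\ $m_G(u * \varphi) = m_G(\varphi)$; this follows from left-invariance of $m_G$ (and the fact that $u*\varphi$ is an average, with respect to the probability measure $u(g)\,dg$, of left translates $L_g \varphi$), together with continuity and the normalisation $\|u\|_1 = 1$. Here the self-adjointness $u^* = u$ and positivity guarantee that $u * \varphi$ stays inside $M_{\mathrm{cb}}A_p(G)$ and that we are genuinely averaging translates. The second observation is that precomposition by a homomorphism with dense image preserves the mean: for $\psi \in M_{\mathrm{cb}}A_p(G)$ the function $\psi \circ \rho$ has the same invariant mean on $H$ as $\psi$ has on $G$. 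This should come from the uniqueness of the invariant mean (Theorem \ref{thm:restrictionofmeanonwap}) applied after checking that $\psi \mapsto m_H(\psi \circ \rho)$ defines an invariant mean on $M_{\mathrm{cb}}A_p(G)$, where the density of $\rho(H)$ in $G$ is used to upgrade $\rho(H)$-invariance to full $G$-invariance by continuity.

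The main obstacle I anticipate is precisely this last density argument: one must verify that $\psi \circ \rho$ genuinely lies in $M_{\mathrm{cb}}A_p(H)$ (which is part of Lemma \ref{lem:grouphomomorphism}, applied to the constant part, or can be seen directly from the characterisation in Proposition \ref{prp:charpcbfm} by composing $\alpha, \beta$ with $\rho$), and that the functional $\psi \mapsto m_H(\psi \circ \rho)$ is $G$-invariant rather than merely $\rho(H)$-invariant. The invariance under $\rho(H)$ is immediate from left- and right-invariance of $m_H$, since $L_{\rho(h)}\psi \circ \rho = L_h(\psi \circ \rho)$ and similarly on the right; extending this to all of $G$ requires that $g \mapsto m_H(L_g \psi \circ \rho)$ be continuous on $G$ so that invariance on the dense subgroup $\rho(H)$ propagates everywhere. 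This continuity, in turn, rests on the weak continuity of $g \mapsto L_g \psi$ in $\mathrm{WAP}(G)$ and the boundedness of $m_H$. Once $G$-invariance is established, uniqueness identifies the functional with $m_G$, and combining $m_G = m_H \circ T$ with the weak$^*$-continuity of both $m_H$ and $T$ completes the proof.
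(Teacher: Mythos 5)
Your proposal is correct and follows essentially the same route as the paper: fix $u$, use Lemma \ref{lem:grouphomomorphism} to get the weak$^*$-weak$^*$-continuous map $T\varphi=(u*\varphi)\circ\rho$, and identify $m_H\circ T$ with $m_G$ so that weak$^*$-continuity transfers. The only difference is that the paper delegates the identification $m_H\circ T=m_G$ to the proof of \cite[Proposition 5.9]{MR3552017}, whereas you spell out that argument (invariance of the mean under convolution with a probability density, plus the density-of-image/uniqueness argument), which is exactly what that reference does.
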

\begin{proof}
Let $u \in C_c(G)$ be a non-negative function with $\|u\|_1$ and $u^*=u$, and let $T\colon M_{\mathrm{cb}}A_p(G)\to M_{\mathrm{cb}}A_p(H)$ be as in Lemma \ref{lem:grouphomomorphism}. In particular, the map $T$ is weak$^*$-weak$^*$-continuous. Let $m$ denote the (unique) invariant mean on $M_{\mathrm{cb}}A_p(H)$. Then $m'(\varphi) = m(T\varphi)$ defines a linear functional on $M_{\mathrm{cb}}A_p(G)$. In the same way as in the proof of \cite[Proposition 5.9]{MR3552017}, we can show that $m'$ is the (unique) invariant mean on $M_{\mathrm{cb}}A_p(G)$. If $m$ is weak$^*$-continuous, then $m'$ is weak$^*$-continuous as well by the weak$^*$-weak$^*$-continuity of $T$.
\end{proof}
\begin{cor} \label{cor:quotients}
If $G$ is a locally compact group with property (T$^*_{M_{\mathrm{cb}}A_p}$) and $N$ is a closed normal subgroup of $G$, then $G/N$ has property (T$^*_{M_{\mathrm{cb}}A_p}$).
\end{cor}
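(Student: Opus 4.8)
The plan is to deduce Corollary \ref{cor:quotients} directly from Proposition \ref{prp:denseimage} by exhibiting the right continuous group homomorphism with dense image. Given a closed normal subgroup $N$ of $G$, the natural candidate is the quotient map $\rho \colon G \to G/N$.

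First I would observe that the quotient map $\rho \colon G \to G/N$ is a continuous group homomorphism. This is standard: when $N$ is a closed normal subgroup of a locally compact group $G$, the quotient $G/N$ is again a locally compact group, and the canonical projection is continuous (indeed, it is open and continuous with respect to the quotient topology). Second, I would note that $\rho$ is in fact surjective, hence its image is all of $G/N$, which is trivially dense in $G/N$. So the hypothesis of Proposition \ref{prp:denseimage} that $\rho$ have dense image is satisfied in the strongest possible way.

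With these two observations in hand, the corollary follows immediately: applying Proposition \ref{prp:denseimage} with $H = G$ (having property (T$^*_{M_{\mathrm{cb}}A_p}$) by assumption) and the target group being $G/N$, we conclude that $G/N$ has property (T$^*_{M_{\mathrm{cb}}A_p}$) as well. The logical direction matches exactly: the proposition transports the property from the source group $H$ (here $G$) to a group containing its dense image (here $G/N$).

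There is essentially no real obstacle here, since this is a direct specialisation of the preceding proposition. The only point requiring a moment's care is the bookkeeping of which group plays the role of $H$ and which plays the role of $G$ in Proposition \ref{prp:denseimage}: the group \emph{assumed} to have the property is the source $H$, so one takes $H = G$ and applies $\rho$ to land inside $G/N$. Once this identification is made, the statement is immediate.
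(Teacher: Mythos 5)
Your argument is correct and is exactly the intended one: the paper states the corollary without proof precisely because it follows by applying Proposition \ref{prp:denseimage} to the canonical quotient map $\rho \colon G \to G/N$, which is a continuous surjective homomorphism and hence has dense image. Your bookkeeping of which group plays the role of $H$ in the proposition is also correct.
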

\begin{lem} \label{lem:submultiplicativityQ}
Let $G_1$ and $G_2$ be locally compact groups, let $1 < p < \infty$, and let $f_1 \in Q_{p,\mathrm{cb}}(G_1)$ and $f_2 \in Q_{p,\mathrm{cb}}(G_2)$. The function $f_1 \times f_2 \colon G_1 \times G_2 \to \mathbb{C}$ given by
\[
    f_1 \times f_2 (g_1,g_2) = f_1(g_1)f_2(g_2)
\]
is an element of $Q_{p,\mathrm{cb}}(G_1 \times G_2)$ and
\begin{align}\label{norm-ineq}
    \|f_1 \times f_2\|_{Q_{p,\mathrm{cb}}(G_1 \times G_2)} \leq \|f_1\|_{Q_{p,\mathrm{cb}}(G_1)} \|f_2\|_{Q_{p,\mathrm{cb}}(G_2)}.
\end{align}
\end{lem}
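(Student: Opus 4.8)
The plan is to reduce to genuine functions, pass to the dual description of the $Q_{p,\mathrm{cb}}$-norm, and then build an economical predual representative of $f_1\times f_2$ by tensoring representatives of $f_1$ and $f_2$.

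First I would reduce to $f_1\in L^1(G_1)$ and $f_2\in L^1(G_2)$. Since $L^1(G_i)$ is by definition dense in $Q_{p,\mathrm{cb}}(G_i)$ and $(f_1,f_2)\mapsto f_1\times f_2$ is bilinear, it suffices to prove \eqref{norm-ineq} for $L^1$-functions and then to extend the bilinear map to the completions by continuity (this extension being what $f_1\times f_2$ means for abstract elements); note that $f_1\times f_2\in L^1(G_1\times G_2)$, so it genuinely defines an element of $Q_{p,\mathrm{cb}}(G_1\times G_2)$. For such functions the quantity to estimate is, by the definition of the predual norm, $\|f_1\times f_2\|_{Q_{p,\mathrm{cb}}(G_1\times G_2)}=\sup_\varphi\bigl|\int_{G_1\times G_2} f_1(g_1)f_2(g_2)\varphi(g_1,g_2)\,dg_1\,dg_2\bigr|$, where the supremum runs over $\varphi$ in the unit ball of $M_{\mathrm{cb}}A_p(G_1\times G_2)$.

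The core of the argument is to represent $f_1\times f_2$ sharply in the predual. Here I would invoke the concrete realisation recalled after Theorem \ref{thm:predual}: via Miao's (equivalently Herz's) description, $Q_{p,\mathrm{cb}}(G_i)$ is a quotient of a space of kernels built from $L^p(G_i)$- and $L^{p'}(G_i)$-data (with $p'$ the conjugate exponent), so for every $\varepsilon>0$ each $f_i$ admits a representing kernel whose norm in that model is at most $\|f_i\|_{Q_{p,\mathrm{cb}}(G_i)}+\varepsilon$. I would then tensor these two representatives to produce a candidate representative of $f_1\times f_2$ on $G_1\times G_2$. Two points must be checked: that this tensor kernel really represents $f_1\times f_2$ as a functional on $M_{\mathrm{cb}}A_p(G_1\times G_2)$, and that its norm is at most the product of the norms of the factors. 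For the latter I would use the isometric identifications $L^p(G_1\times G_2)\cong L^p(G_1)\otimes L^p(G_2)$ and $L^{p'}(G_1\times G_2)\cong L^{p'}(G_1)\otimes L^{p'}(G_2)$ together with the stability of $QSL^p$ under these tensor products, which makes the relevant cross norm multiplicative; letting $\varepsilon\to0$ then yields \eqref{norm-ineq}.

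The main obstacle is precisely extracting the predual norm $\|f_i\|_{Q_{p,\mathrm{cb}}(G_i)}$ rather than the cruder $\|f_i\|_1$. A naive attack, substituting the characterisation of $\varphi$ from Proposition \ref{prp:charpcbfm} and integrating the coefficient maps $\alpha,\beta$ against $f_1$ and $f_2$, only controls everything by the $L^1$-norms, because it pairs $L^1$ against $L^\infty$; the improvement to the $Q_{p,\mathrm{cb}}$-norm forces one to pair $L^p$ against $L^{p'}$, which is exactly what the concrete predual model supplies. Equivalently, by duality the statement amounts to the slice estimate that, for $\varphi$ in the unit ball of $M_{\mathrm{cb}}A_p(G_1\times G_2)$, the partial pairing $g_2\mapsto\int_{G_1} f_1(g_1)\varphi(g_1,g_2)\,dg_1$ lies in $M_{\mathrm{cb}}A_p(G_2)$ with norm at most $\|f_1\|_{Q_{p,\mathrm{cb}}(G_1)}$. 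The difficulty is entirely in producing a $QSL^p$-representation witnessing this sharpened bound, and in the bookkeeping needed to verify that tensoring predual representatives is compatible with passing to the product group; this is where the real work lies.
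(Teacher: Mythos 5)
Your proposal follows essentially the same route as the paper's proof: both pass to Herz's concrete realisation of $Q_{p,\mathrm{cb}}(G)$ as a quotient of the tensor-product space $T_p(G)=\left(L^p(G)\otimes_{\gamma}L^{p'}(G)\right)\otimes_{\gamma}\left(L^{p'}(G)\otimes_{\lambda}L^p(G)\right)$, tensor near-optimal representatives of $f_1$ and $f_2$, and deduce \eqref{norm-ineq} from the multiplicativity of the cross norms under $L^p(G_1\times G_2)\cong L^p(G_1)\otimes L^p(G_2)$, exactly as the paper does by adapting Cowling--Haagerup's Lemma~1.4. The one point to sharpen is that the multiplicativity of the injective factor comes not from ``stability of $QSL^p$'' but from identifying $\bigl\lVert\sum_i h_i\otimes k_i\bigr\rVert_{\lambda}$ with the operator norm of a finite-rank map on $L^p(G_1)$ and using $\lVert L\otimes M\rVert=\lVert L\rVert\,\lVert M\rVert$ for bounded operators on $L^p$-spaces.
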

\begin{proof}
   The proof is a modification of the proof of \cite[Lemma 1.4]{MR0996553} up to some straightforward changes. Indeed, let $p^\prime=\frac{p}{p-1}$. The tensor product space 
   \[
   T(G)=\left(L^2(G)\otimes_{\gamma}L^2(G)\right)\otimes_\gamma\left(L^2(G)\otimes_{\lambda}L^2(G)\right)
   \]
  in the proof of \cite[Lemma 1.4]{MR0996553} is replaced by 
   \[
   T_p(G)=\left(L^p(G)\otimes_{\gamma}L^{p^\prime}(G)\right)\otimes_{\gamma}\left(L^{p^\prime}(G)\otimes_{\lambda}L^p(G)\right),
   \]
    where $\otimes_\gamma$ and $\otimes_\lambda$ denote the projective and the injective tensor product, respectively.
    
Herz proved (see \cite[p.~151]{MR425511}) that $Q_{p,cb}(G)$ (denoted by $QF_p(G)$ in \cite{MR425511}) is the image of the tensor product space $T_p(G)$ under the map $\pi$ defined on simple tensors by the formula
\[
\pi(f_1\otimes f_2\otimes f_3\otimes f_4)=(f_1f_3)* (f_2f_4)^*,
\]
where ${}^*$ denotes the involution of $L^1(G)$. Moreover, a function $f$ lies in $ Q_{p,cb}(G)$ if and only if there exists an $s$ in $T_p(G)$ so that $\pi(s)=f$, and 
\[
\|f\|_{Q_{p,\mathrm{cb}}(G)}=\inf\{\|s\|_{T_p} \mid s\in T_p(G),\,\pi(s)=f\}.
\]
Similar to the the proof of \cite[Lemma 1.4]{MR0996553}, the smallest cross norm \linebreak $\|\mathop{\sum}\limits_{i=1}^m h_i\otimes k_i\|_{\lambda}$, where in our setting $h_i\in L^{p^\prime}(G_1)$ and $k_i\in L^p(G_1)$, is the operator norm $\|L\|$ of the linear map $L$ on $L^p(G_1)$ sending $f$ to $\mathop{\sum}\limits_{i=1}^m\langle f,h_i\rangle k_i$. Now recall that if $L \colon L^p(G_1)\to L^p(G_1)$ and $M \colon L^p(G_2)\to L^p(G_2)$ are bounded operators, then 
   \[
   L\otimes M \colon L^p(G_1\times G_2)\to L^p(G_1\times G_2)
   \]
   is a bounded operator with $\| L\otimes M\|=\|L\|\|M\|$. Using this, as in the proof of \cite[Lemma 1.4]{MR0996553}, we obtain that for $s\in T_p(G_1)$ and $t\in T_p(G_2)$,
   \[
   \|s\otimes t\|_{T_p}\leq\|s\|_{T_p}\|t\|_{T_{p}}.
   \]
\end{proof}

The following lemma generalises \cite[Lemma 5.11]{MR3552017} with the same proof.
\begin{lem} \label{lem:meanapproximation}
Let $G$ be a locally compact group with property (T$^*_{M_{\mathrm{cb}}A_p}$), and let $m \in Q_{p,\mathrm{cb}}(G)$ denote the (unique) invariant mean on $M_{\mathrm{cb}}A_p(G)$. Then there exists a sequence $f_n\in L^1(G)$ of non-negative functions with $\|f_n\|_1 = 1$ for all $n \in \mathbb{N}$ such that $\|f_n - m\|_{Q_{p,\mathrm{cb}}(G)} \to 0$.
\end{lem}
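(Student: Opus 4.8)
The plan is to exploit the fact that property (T$^*_{M_{\mathrm{cb}}A_p}$) means the mean $m$ is weak$^*$-continuous, i.e.\ $m \in Q_{p,\mathrm{cb}}(G) = M_{\mathrm{cb}}A_p(G)^*{}^*$ lives in the predual itself. The conclusion we want is that $m$ can be norm-approximated by honest probability densities $f_n \in L^1(G)$. Since $L^1(G)$ is dense in $Q_{p,\mathrm{cb}}(G)$ by construction (it is the completion of $L^1(G)$), the element $m$ is certainly a norm-limit of \emph{some} sequence in $L^1(G)$; the content of the lemma is that these approximants may be taken to be non-negative and normalised. My first step would be to write down a sequence $g_n \in L^1(G)$ with $\|g_n - m\|_{Q_{p,\mathrm{cb}}} \to 0$, coming directly from density, and then to massage the $g_n$ into non-negative probability densities without destroying the convergence.

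The key tool for the massaging step is the \emph{invariance} of the mean together with an averaging argument. Since $m$ is two-sided invariant, for any $g \in G$ the translate $L_g m = m$; more usefully, for any probability density $u \in L^1(G)$ the convolution $u * m = m$ (because $m$ is invariant and $\int u = 1$). The standard trick is to convolve the approximants $g_n$ against $m$ or to consider $u_n * g_n * u_n^*$ for suitable $u_n$, using that convolution is continuous on $Q_{p,\mathrm{cb}}(G)$. To force non-negativity and unit mass, I would replace $g_n$ by $|g_n|/\|g_n\|_1$ or apply a suitable positive projection, and control the $Q_{p,\mathrm{cb}}$-error by comparing against $m$ itself, which is a mean and hence ``positive and normalised'' in the appropriate dual sense. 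The inequality $\|m\| = m(1) = 1$ and the positivity of $m$ as a functional are the structural facts that let one pass from arbitrary $L^1$-approximants to positive normalised ones.

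The main obstacle I expect is the interplay between the two norms: $\|\cdot\|_1$ and $\|\cdot\|_{Q_{p,\mathrm{cb}}}$. One has $\|f\|_{Q_{p,\mathrm{cb}}} \leq \|f\|_1$ (since $M_{\mathrm{cb}}A_p(G) \subset L^\infty$ contractively), so convergence in $\|\cdot\|_1$ would be more than enough—but we cannot expect $\|g_n - m\|_1 \to 0$, because $m$ need \emph{not} lie in $L^1(G)$ as a function. Thus the approximation genuinely takes place in the weaker $Q_{p,\mathrm{cb}}$-norm, and the delicate point is that positivity and normalisation are $L^1$-side conditions on the approximants while the convergence is measured on the $Q_{p,\mathrm{cb}}$-side. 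The way through is to test against $M_{\mathrm{cb}}A_p(G)$: since $m(\varphi)$ equals the constant value in the weakly closed convex hull of the orbit of $\varphi$, and since evaluation $f \mapsto \int f\varphi$ is exactly the duality pairing, one shows that the non-negative normalised approximants can be chosen so that $\int f_n \varphi \to m(\varphi)$ uniformly over the unit ball of $M_{\mathrm{cb}}A_p(G)$, which is precisely $\|f_n - m\|_{Q_{p,\mathrm{cb}}} \to 0$.

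Because the statement asserts that the proof is identical to that of \cite[Lemma 5.11]{MR3552017}, I would expect the actual argument there to realise exactly this scheme—density of $L^1(G)$ in the predual followed by a positivity/normalisation adjustment exploiting that $m$ is itself an element of the predual with $\|m\|=1$—and the write-up would simply verify that the $p=2$ argument uses no feature of the Hilbert space setting beyond the general framework recalled in Section~\ref{sec:preliminaries}.
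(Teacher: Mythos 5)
There is a genuine gap, and it sits exactly at the step that carries the whole content of the lemma. Density of $L^1(G)$ in $Q_{p,\mathrm{cb}}(G)$ gives you some $g_n\to m$ in $\|\cdot\|_{Q_{p,\mathrm{cb}}}$, and pairing with $1\in M_{\mathrm{cb}}A_p(G)$ gives $\int g_n\to m(1)=1$; but none of your proposed ``massaging'' devices is controlled. Since $\|\cdot\|_{Q_{p,\mathrm{cb}}}\le\|\cdot\|_1$, convergence in $Q_{p,\mathrm{cb}}$ says nothing about $\|g_n\|_1$, so $\|g_n\|_1$ need not tend to $1$ and the negative part $g_n^-$ may carry large $L^1$-mass; consequently $\bigl\| |g_n|/\|g_n\|_1 - g_n\bigr\|_{Q_{p,\mathrm{cb}}}$ is not estimated by anything you have. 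The convolution identity $u*m=m$ does not help either: $u*g_n$ is no more positive than $g_n$, and $u*m$ is not an $L^1$-function. Your closing sentence (``one shows that the non-negative normalised approximants can be chosen so that $\int f_n\varphi\to m(\varphi)$ uniformly over the unit ball'') is a restatement of the lemma, not an argument.

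The proof the paper defers to (\cite[Lemma 5.11]{MR3552017}) avoids any correction of a pre-chosen sequence. Let $P\subset L^1(G)$ be the convex set of non-negative functions of $L^1$-norm one, and let $\overline{P}$ be its closure in $Q_{p,\mathrm{cb}}(G)$; one shows directly that $m\in\overline{P}$. If not, Hahn--Banach separation (legitimate precisely because property (T$^*_{M_{\mathrm{cb}}A_p}$) puts $m$ inside the normed space $Q_{p,\mathrm{cb}}(G)$, whose dual is $M_{\mathrm{cb}}A_p(G)$) yields $\varphi\in M_{\mathrm{cb}}A_p(G)$ and $s<t$ with $\mathrm{Re}\,\langle\varphi,f\rangle\le s<t\le \mathrm{Re}\,\langle\varphi,m\rangle$ for all $f\in P$. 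Replacing $\varphi$ by its real part (the space is conjugation-closed), one gets $\sup_{f\in P}\int f\varphi=\sup_G\varphi$, while $m(\varphi)\le\sup_G\varphi$ because $m$ is a positive unital functional and $\sup_G\varphi\cdot 1-\varphi\ge 0$; this contradicts $s<t$. You correctly identified the two structural inputs ($m\in Q_{p,\mathrm{cb}}(G)$ and $m$ being a mean), but the argument that turns them into the conclusion is a separation argument on the convex set $P$, not a positivity repair of a generic approximating sequence.
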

\begin{prp}\label{prp:direct product Tstar}
  Let $G_1$ and $G_2$ be two locally compact groups. The direct product $G=G_1 \times G_2$ has property (T$^*_{M_{\mathrm{cb}}A_p}$) if and only if $G_1$ and $G_2$ have property (T$^*_{M_{\mathrm{cb}}A_p}$).
\end{prp}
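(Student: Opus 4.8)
The plan is to treat the two implications separately, with the forward direction being immediate and the converse resting on the submultiplicativity and mean-approximation lemmas. For the forward implication, observe that $G_1 \cong (G_1 \times G_2)/(\{e\} \times G_2)$ and $G_2 \cong (G_1 \times G_2)/(G_1 \times \{e\})$ are quotients of $G = G_1 \times G_2$ by closed normal subgroups; hence Corollary \ref{cor:quotients} shows at once that property (T$^*_{M_{\mathrm{cb}}A_p}$) for $G$ descends to both $G_1$ and $G_2$.

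For the converse, suppose $G_1$ and $G_2$ have property (T$^*_{M_{\mathrm{cb}}A_p}$), and let $m_1$ and $m_2$ denote the corresponding invariant means, which lie in $Q_{p,\mathrm{cb}}(G_1)$ and $Q_{p,\mathrm{cb}}(G_2)$ respectively. By Lemma \ref{lem:meanapproximation} I would choose non-negative $f_n \in L^1(G_1)$ and $g_n \in L^1(G_2)$, each of $L^1$-norm $1$, with $\|f_n - m_1\|_{Q_{p,\mathrm{cb}}(G_1)} \to 0$ and $\|g_n - m_2\|_{Q_{p,\mathrm{cb}}(G_2)} \to 0$. The element $m_1 \times m_2$ lies in $Q_{p,\mathrm{cb}}(G)$ by Lemma \ref{lem:submultiplicativityQ}, and the same lemma, applied to the telescoping identity $f_n \times g_n - m_1 \times m_2 = (f_n - m_1) \times g_n + m_1 \times (g_n - m_2)$, yields
\[
\|f_n \times g_n - m_1 \times m_2\|_{Q_{p,\mathrm{cb}}(G)} \leq \|f_n - m_1\|_{Q_{p,\mathrm{cb}}(G_1)}\,\|g_n\|_{Q_{p,\mathrm{cb}}(G_2)} + \|m_1\|_{Q_{p,\mathrm{cb}}(G_1)}\,\|g_n - m_2\|_{Q_{p,\mathrm{cb}}(G_2)}.
\]
Since $\|g_n\|_{Q_{p,\mathrm{cb}}(G_2)} \leq \|g_n\|_1 = 1$ (as noted in the proof of Theorem \ref{thm:predual}) and $\|m_1\|_{Q_{p,\mathrm{cb}}(G_1)} = 1$, the right-hand side tends to $0$, so $f_n \times g_n \to \mu := m_1 \times m_2$ in $Q_{p,\mathrm{cb}}(G)$.

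It then remains to identify $\mu$ with the invariant mean $m$ on $M_{\mathrm{cb}}A_p(G)$; once this is done, the membership $\mu \in Q_{p,\mathrm{cb}}(G)$ shows that $m$ is weak$^*$-continuous, i.e.~$G$ has property (T$^*_{M_{\mathrm{cb}}A_p}$). As a norm limit of positive elements of norm $1$ that take the value $1$ on the constant function, $\mu$ is a mean. For invariance I would use that left translation acts isometrically on $Q_{p,\mathrm{cb}}(G_i)$, being dual to the isometric translation action on $M_{\mathrm{cb}}A_p(G_i)$, and that it fixes the invariant mean $m_i$; hence $L_{h_1^{-1}} f_n \to m_1$ and $L_{h_2^{-1}} g_n \to m_2$, and the telescoping estimate again gives $(L_{h_1^{-1}} f_n) \times (L_{h_2^{-1}} g_n) \to \mu$. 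Combining this with the adjunction $\langle L_{(h_1,h_2)}\varphi, f \rangle = \langle \varphi, L_{(h_1,h_2)^{-1}} f\rangle$ shows $\langle L_{(h_1,h_2)}\varphi, \mu\rangle = \langle \varphi, \mu\rangle$ for every $\varphi \in M_{\mathrm{cb}}A_p(G)$, so $\mu$ is left invariant; right invariance is analogous. By the uniqueness of the invariant mean (Theorem \ref{thm:restrictionofmeanonwap}), $\mu = m$.

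I expect the identification $\mu = m$ to be the main obstacle. The convergence $f_n \times g_n \to \mu$ is a routine consequence of submultiplicativity, but verifying that $\mu$ is translation invariant on all of $M_{\mathrm{cb}}A_p(G)$, rather than merely on the (a priori non-dense) span of product functions, hinges on the isometry of the predual translation action and on $m_1$ and $m_2$ being fixed points of it.
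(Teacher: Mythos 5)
Your proposal is correct and follows essentially the same route as the paper: quotients (Corollary \ref{cor:quotients}) for the forward direction, and for the converse the approximation of each mean by $L^1$-functions (Lemma \ref{lem:meanapproximation}) combined with the submultiplicativity estimate (Lemma \ref{lem:submultiplicativityQ}) and a telescoping/translation argument to produce a weak$^*$-continuous invariant mean on the product. The only cosmetic difference is that you name the limit $m_1\times m_2$ and invoke uniqueness explicitly, whereas the paper works with the Cauchy sequence $f_n^{(1)}\times f_n^{(2)}$ and verifies invariance of its limit directly.
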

\begin{proof}
 Suppose that $G_1$ and $G_2$ have property (T$^*_{M_{\mathrm{cb}}A_p}$). For $i=1,2$, let $m_i$ be the invariant mean on $M_{\mathrm{cb}}A_p(G_i)$. For $i=1,2$, by Lemma \ref{lem:meanapproximation}, there are sequences $(f_n^{(i)})$ in $L^1(G_i)_{\geq 0}$ with $\|f_n^{(i)}\|_1=1$ for all $n$ such that $\|f_n^{(i)}-m_i\|_{Q_{p,{\mathrm{cb}}(G_i)}} \to 0$ as $n \to \infty$. For all $g_i \in G_i$, we have
\[
    \|L_{g_i} f_n^{(i)} - f_n^{(i)}\|_{{Q_{p,{\mathrm{cb}}(G_i)}}}\to 0,
\]
because $m_i$ is left invariant. By Lemma \ref{lem:submultiplicativityQ}, the sequence $f_n^{(1)} \times f_n^{(2)}$ is a Cauchy sequence in $Q_{p,\mathrm{cb}}(G)$, and its limit $M$ is a weak$^*$-continuous mean on $M_{\mathrm{cb}}A_p(G)$. It follows that
\[
    \|L_g (f_n^{(1)}\times f_n^{(2)}) - f_n^{(1)}\times f_n^{(2)}\|_{Q_{p,\mathrm{cb}}(G)} \to 0 \qquad\text{for all } g \in G.
\]
Therefore, the mean $M$ is left invariant, so $G$ has property (T$^*_{M_{\mathrm{cb}}A_p}$).

The other direction follows directly from Corollary \ref{cor:quotients}.
\end{proof}

The proof of the following result is a modification of \cite[Proposition 5.13]{MR3552017}.
\begin{prp} \label{prp:modulo-compact}
Let $G$ be a locally compact group, and let $K$ be a compact closed normal subgroup of $G$. Then $G$ has property (T$^*_{M_{\mathrm{cb}}A_p}$) if and only if $G/K$ has property (T$^*_{M_{\mathrm{cb}}A_p}$).
\end{prp}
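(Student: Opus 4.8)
The plan is to treat the two implications separately. The implication that $G$ having property (T$^*_{M_{\mathrm{cb}}A_p}$) forces $G/K$ to have it is immediate: since $K$ is a closed normal subgroup, $G/K$ is a quotient of $G$, so this is a special case of Corollary \ref{cor:quotients}. All the work is therefore in the converse, which I approach by averaging over the compact group $K$ and relating the mean on $M_{\mathrm{cb}}A_p(G)$ to the mean on $M_{\mathrm{cb}}A_p(G/K)$.

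Concretely, writing $dk$ for the normalised Haar measure on $K$, I would first introduce the averaging operator $P\colon M_{\mathrm{cb}}A_p(G)\to M_{\mathrm{cb}}A_p(G)$ given by $P\varphi(g)=\int_K\varphi(gk)\,dk$. Because right translation acts isometrically and weak$^*$-weak$^*$-continuously on $M_{\mathrm{cb}}A_p(G)$ — with, on the predual side, the corresponding isometric action $f\mapsto R_kf$ on $Q_{p,\mathrm{cb}}(G)$ — the operator $P$ is an average of isometries over a probability measure, hence a contraction; moreover it is weak$^*$-weak$^*$-continuous, being the adjoint of the analogous contractive averaging $f\mapsto\int_K f(gk)\,dk$ on $Q_{p,\mathrm{cb}}(G)$. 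The function $P\varphi$ is right $K$-invariant by invariance of $dk$, and, since $K$ is normal, right $K$-invariance forces left $K$-invariance as well (write $kg=g(g^{-1}kg)$). Thus $P\varphi$ is $K$-bi-invariant and descends to a function $\dot{P\varphi}$ on $G/K$.

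The key technical point, which I expect to be the main obstacle, is to verify that this descent genuinely produces an element of $M_{\mathrm{cb}}A_p(G/K)$ of no larger norm, so that $\varphi\mapsto\dot{P\varphi}$ is a weak$^*$-weak$^*$-continuous contraction into $M_{\mathrm{cb}}A_p(G/K)$. For this I would use the characterisation in Proposition \ref{prp:charpcbfm}: given a representation $P\varphi(hg^{-1})=\langle\beta(h),\alpha(g)\rangle$ with $\alpha,\beta$ bounded continuous into a space $E\in QSL^p$ and its dual, one averages the vector-valued maps over $K$, setting $\bar\alpha(g)=\int_K\alpha(gk)\,dk$ and $\bar\beta(h)=\int_K\beta(hk)\,dk$; these are right $K$-invariant, hence factor through $G/K$, and a short computation using the $K$-bi-invariance of $P\varphi$ and the normality of $K$ shows that $\langle\bar\beta,\bar\alpha\rangle$ still represents the descended function, with $\|\bar\alpha\|_\infty\|\bar\beta\|_\infty\le\|\alpha\|_\infty\|\beta\|_\infty$. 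Taking the infimum yields the required norm bound. (Equivalently, one checks that $M_{\mathrm{cb}}A_p(G/K)$ embeds isometrically onto the $K$-bi-invariant subspace of $M_{\mathrm{cb}}A_p(G)$ via the pullback $q^*$ along the quotient map $q\colon G\to G/K$.)

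With the averaging and descent in hand, I would define $\bar m\colon M_{\mathrm{cb}}A_p(G)\to\mathbb{C}$ by $\bar m(\varphi)=m_{G/K}(\dot{P\varphi})$, where $m_{G/K}$ is the invariant mean on $M_{\mathrm{cb}}A_p(G/K)$. This is plainly a mean, as $P$ and the descent are positive and unital. To see it is two-sided invariant, one checks $P(L_h\varphi)=L_h(P\varphi)$ directly, and $P(R_h\varphi)=R_h(P\varphi)$ using that conjugation by $h$ preserves the Haar measure of the compact group $K$ (here normality of $K$ is again essential); passing to $G/K$ and invoking the invariance of $m_{G/K}$ then gives invariance of $\bar m$. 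By the uniqueness of the invariant mean from Theorem \ref{thm:restrictionofmeanonwap}, applied to the subspace $M_{\mathrm{cb}}A_p(G)\subseteq\mathrm{WAP}(G)$, we conclude $\bar m=m_G$. Finally, if $G/K$ has property (T$^*_{M_{\mathrm{cb}}A_p}$), then $m_{G/K}$ is weak$^*$-continuous, and since $m_G=m_{G/K}\circ(\varphi\mapsto\dot{P\varphi})$ is a composition of weak$^*$-continuous maps, $m_G$ is weak$^*$-continuous, i.e.\ $G$ has property (T$^*_{M_{\mathrm{cb}}A_p}$).
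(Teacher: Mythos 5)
Your proposal is correct and follows essentially the route the paper intends: the paper omits the proof, stating only that it is a modification of \cite[Proposition 5.13]{MR3552017}, and that argument is precisely your combination of Corollary \ref{cor:quotients} for one direction with the $K$-averaging operator and its descent to $G/K$ for the other. The weak$^*$-weak$^*$-continuous contraction $\widetilde{T}\colon M_{\mathrm{cb}}A_p(G)\to M_{\mathrm{cb}}A_p(G/K)$, $\widetilde{T}(\psi)(gK)=\int_K\psi(gk)\,dk$, that you construct by hand from Proposition \ref{prp:charpcbfm} is exactly the operator the paper already invokes (via \cite[Proposition 5.1]{MR3706609}) in Lemma \ref{lem:wstarclosedquotient}, so your argument matches the intended proof.
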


To conclude this section, we establish a relation between property (T$^*_{M_{\mathrm{cb}}A_p}$) and property (T$^*_{M_{\mathrm{cb}}A_q}$) for different $p$ and $q$.
\begin{prp} \label{prp:tstarpq}
Let $G$ be a locally compact group and $1 < p \leq q \leq 2$ or $2 \leq q \leq p < \infty$. If $G$ has property (T$^*_{M_{\mathrm{cb}}A_p}$), then it has property (T$^*_{M_{\mathrm{cb}}A_q}$).
\end{prp}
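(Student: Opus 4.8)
The plan is to exploit the contractive embedding $M_{\mathrm{cb}}A_q(G) \hookrightarrow M_{\mathrm{cb}}A_p(G)$ that is valid precisely in the regime $1 < p \leq q \leq 2$ or $2 \leq q \leq p < \infty$, as recalled from \cite[Proposition 6.1]{MR2652178} in the Preliminaries. This embedding, call it $\iota \colon M_{\mathrm{cb}}A_q(G) \to M_{\mathrm{cb}}A_p(G)$, is a norm-decreasing inclusion at the level of functions; the goal is to recognise it as the adjoint of a contractive map between the preduals and then transport the weak$^*$-continuity of the mean across it, exactly in the spirit of the proof of Proposition \ref{prp:Tstarcomparison}.

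First I would set up the predual map. Since $\|\varphi\|_{M_{\mathrm{cb}}A_p} \leq \|\varphi\|_{M_{\mathrm{cb}}A_q}$ for all $\varphi$ in $M_{\mathrm{cb}}A_q(G)$, the identity on $L^1(G)$ satisfies $\|f\|_{Q_{p,\mathrm{cb}}} \leq \|f\|_{Q_{q,\mathrm{cb}}}$ directly from the definition of the predual norms as suprema over the respective unit balls of multipliers. Hence the identity on $L^1(G)$ extends to a linear contraction $j \colon Q_{q,\mathrm{cb}}(G) \to Q_{p,\mathrm{cb}}(G)$. I would then verify that the adjoint $j^* \colon M_{\mathrm{cb}}A_p(G) \to M_{\mathrm{cb}}A_q(G)$ is precisely the inclusion $\iota$ (one checks this on the dense subspace $L^1(G)$ using the duality pairing given by integration), and that $j^*$ is therefore weak$^*$-weak$^*$-continuous by construction as an adjoint.

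Next I would transport the mean. Let $m_p$ and $m_q$ denote the unique invariant means on $M_{\mathrm{cb}}A_p(G)$ and $M_{\mathrm{cb}}A_q(G)$ respectively; by Theorem \ref{thm:restrictionofmeanonwap} both are restrictions of the single mean on $\mathrm{WAP}(G)$. Because $\iota = j^*$ is the inclusion that is compatible with this common mean, we have $m_q = m_p \circ j^*$, that is, $m_q(\varphi) = m_p(\varphi)$ for $\varphi \in M_{\mathrm{cb}}A_q(G) \subseteq M_{\mathrm{cb}}A_p(G)$ (the means agree because they are both the unique constant in the relevant weakly closed convex hull in $\mathrm{WAP}(G)$). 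Now if $G$ has property (T$^*_{M_{\mathrm{cb}}A_p}$), then $m_p$ is $\sigma(M_{\mathrm{cb}}A_p(G), Q_{p,\mathrm{cb}}(G))$-continuous, and composing the weak$^*$-weak$^*$-continuous map $j^*$ with the weak$^*$-continuous $m_p$ yields that $m_q = m_p \circ j^*$ is $\sigma(M_{\mathrm{cb}}A_q(G), Q_{q,\mathrm{cb}}(G))$-continuous, which is property (T$^*_{M_{\mathrm{cb}}A_q}$).

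The step I expect to be the main obstacle is the clean identification $j^* = \iota$ together with the compatibility $m_q = m_p \circ j^*$. The former requires being careful that the abstract adjoint of the predual contraction genuinely coincides with the concrete function-space inclusion rather than merely a map of the same norm; this is routine but must be checked on $L^1(G)$ and extended by density. The latter hinges on the uniqueness of the invariant mean (Theorem \ref{thm:restrictionofmeanonwap}) to force $m_q$ and the restriction of $m_p$ to agree, so I would emphasise that both means descend from $\mathrm{WAP}(G)$. Everything else is a direct adjoint-of-a-contraction argument, structurally identical to Proposition \ref{prp:Tstarcomparison}.
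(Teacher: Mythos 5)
Your overall strategy is exactly the one the paper uses: realise the contractive inclusion $\iota \colon M_{\mathrm{cb}}A_q(G) \to M_{\mathrm{cb}}A_p(G)$ as the adjoint of a contraction between the preduals, conclude that $\iota$ is weak$^*$-weak$^*$-continuous, and pull back the mean using its uniqueness. However, the key step is stated with the directions reversed, and as written it is both incorrect and internally inconsistent. Since $\|\varphi\|_{M_{\mathrm{cb}}A_p} \leq \|\varphi\|_{M_{\mathrm{cb}}A_q}$, the unit ball of $M_{\mathrm{cb}}A_q(G)$ is \emph{contained} in that of $M_{\mathrm{cb}}A_p(G)$; taking suprema over these sets in the definition of the predual norms therefore gives $\|f\|_{Q_{q,\mathrm{cb}}} \leq \|f\|_{Q_{p,\mathrm{cb}}}$ for $f \in L^1(G)$ --- the opposite of the inequality you assert. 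Consequently the identity on $L^1(G)$ extends to a contraction $j \colon Q_{p,\mathrm{cb}}(G) \to Q_{q,\mathrm{cb}}(G)$, not $Q_{q,\mathrm{cb}}(G) \to Q_{p,\mathrm{cb}}(G)$. With your (reversed) $j$, the adjoint $j^*$ would be a map $M_{\mathrm{cb}}A_p(G) \to M_{\mathrm{cb}}A_q(G)$, which cannot be the inclusion $\iota$ you defined (that goes the other way, and $M_{\mathrm{cb}}A_p(G) \not\subseteq M_{\mathrm{cb}}A_q(G)$ in general); moreover the composition $m_p \circ j^*$ you write down does not typecheck with that $j^*$.

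With the directions corrected, everything you say goes through and coincides with the paper's argument: $j \colon Q_{p,\mathrm{cb}}(G) \to Q_{q,\mathrm{cb}}(G)$ is a contraction extending the identity on $L^1(G)$, its adjoint is checked on $L^1(G)$ to be $\iota \colon M_{\mathrm{cb}}A_q(G) \to M_{\mathrm{cb}}A_p(G)$, hence $\iota$ is weak$^*$-weak$^*$-continuous (the paper phrases the same fact via \cite[Proposition 2.1]{MR3706609}: the adjoint $\iota^*$ maps $Q_{p,\mathrm{cb}}(G)$ into $Q_{q,\mathrm{cb}}(G)$), and $m_q = m_p \circ \iota$ by uniqueness of the invariant mean (Theorem \ref{thm:restrictionofmeanonwap}), so weak$^*$-continuity of $m_p$ transfers to $m_q$. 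The gap is thus a fixable orientation error in the central step rather than a wrong approach, but as written the proof does not establish the weak$^*$-continuity of the inclusion, which is the whole point.
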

\begin{proof}
  Let $1 < p \leq q \leq 2$ or $2 \leq q \leq p < \infty$. Suppose that $G$ has property (T$^*_{M_{\mathrm{cb}}A_p}$). By \cite[Proposition 6.1]{MR2652178}, the inclusion map $$\iota \colon M_{\mathrm{cb}}A_q(G) \to M_{\mathrm{cb}}A_p(G)$$ is a contraction. Its adjoint $\iota^* \colon M_{\mathrm{cb}}A_p(G)^*\to M_{\mathrm{cb}}A_q(G)^*$, also a contraction, maps $Q_{p,\mathrm{cb}}(G)$ to $Q_{q,\mathrm{cb}}(G)$ (see \cite[Proposition 2.1]{MR3706609}). Therefore, $\iota \colon M_{\mathrm{cb}}A_q(G)\to M_{\mathrm{cb}}A_p(G)$ is weak$^*$-weak$^*$-continuous. Let $m$ denote the (unique) invariant mean on $M_{\mathrm{cb}}A_p(G)$. Since $m\big\vert_{M_{\mathrm{cb}}A_q(G)}=m\circ\iota$ is the invariant mean on $M_{\mathrm{cb}}A_q(G)$ and $m$ and $\iota$ are weak$^*$-continuous, we obtain that $G$ has property (T$^*_{M_{\mathrm{cb}}A_q}$).  
\end{proof}

\section{Simple Lie groups with property (T$^*_{M_{\mathrm{cb}}A_p}$)} \label{sec:tstarpsimple}
In this section, we determine exactly which connected simple Lie groups with finite center have property (T$^*_{M_{\mathrm{cb}}A_p}$) for $1 < p < \infty$.

Recall that a (connected) Lie group is called simple if its Lie algebra is simple and that it is called semisimple if its Lie algebra is a direct sum of simple Lie algebras. Let $G$ be a connected semisimple Lie group with finite center, and let $\mathfrak{g}$ denote its Lie algebra. Then $\mathfrak{g}$ has a Cartan decomposition $\mathfrak{g} = \mathfrak{k} + \mathfrak{p}$, where $\mathfrak{k}$ is the Lie algebra of a maximal compact subgroup $K$ of $G$. Furthermore, the Lie group $G$ has a decomposition $G=KAK$, where $A$ is an abelian Lie group, whose Lie algebra $\mathfrak{a}$ is a maximal abelian subspace of $\mathfrak{p}$. The real rank of $G$ is defined as the dimension of $\mathfrak{a}$. For details, see e.g.~\cite{MR1920389} or \cite{MR0746308}. The real rank is an important invariant for our purposes.

Let us first recall two examples of simple Lie groups with real rank $2$ and show that they satisfy property (T$^*_{M_{\mathrm{cb}}A_p}$).

First, let $\mathrm{SL}(3,\mathbb{R})$ denote the special linear group, i.e.~the group of $3 \times 3$-matrices with real entries and determinant $1$. The special orthogonal group $\mathrm{SO}(3)$ is the natural maximal compact subgroup of $\mathrm{SL}(3,\mathbb{R})$.

For the second example, let $J$ be the matrix defined by
\[
  J=\left( \begin{array}{cc} 0 & I_2 \\ -I_2 & 0 \end{array} \right),
\]
where $I_2$ is the identity $2 \times 2$-matrix. The symplectic group $\mathrm{Sp}(2,\mathbb{R})$ is defined as
\[
	\mathrm{Sp}(2,\mathbb{R})=\{g \in \mathrm{GL}(4,\mathbb{R}) \mid g^t J g = J\},
\]
where $g^t$ denotes the transpose of $g$. The group
\[
  K= \left\{ \left( \begin{array}{cc} A & -B \\ B & A \end{array} \right) \in \mathrm{Mat}_{4}(\mathbb{R}) \biggm\vert A+iB \in \mathrm{U}(2) \right\} \cong \mathrm{U}(2)
\]
is a maximal compact subgroup of $\mathrm{Sp}(2,\mathbb{R})$.

Before we can establish property (T$^*_{M_{\mathrm{cb}}A_p}$) for certain specific Lie groups and study it for rather general classes of Lie groups, we recall the following powerful result of Veech \cite[Theorem 1.4]{MR0550072}, which can be seen as a strong version of the Howe--Moore property. First, recall that a Lie group $G$ is semisimple if its Lie algebra $\mathfrak{g}$ is semisimple, i.e.~it decomposes as a direct sum $\mathfrak{g} = \mathfrak{s}_1 \oplus \ldots \oplus \mathfrak{s}_n$, where the algebras $\mathfrak{s}_i$ are simple Lie algebras. We say that a connected semisimple Lie group does not have compact simple factors if for all $i=1,\ldots,n$, the analytic subgroup $S_i$ of $G$ corresponding to the Lie algebra $\mathfrak{s}_i$ is not compact.
\begin{thm}[Veech] \label{thm:veech}
Let $G$ be a connected semisimple Lie group with finite center and without compact simple factors. Then
\[
    \mathrm{WAP}(G)=C_0(G)\oplus \mathbb{C}1
\]
and for every $\varphi \in \mathrm{WAP}(G)$, we have
\[
m(\varphi)=\lim_{g\to\infty}\varphi(g),
\]
where $m$ is the unique invariant mean on $\mathrm{WAP}(G)$.
\end{thm}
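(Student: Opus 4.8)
The plan is to prove the two displayed assertions in turn, the second being a fairly formal consequence of the first. For the decomposition $\mathrm{WAP}(G) = C_0(G) \oplus \mathbb{C}1$, the inclusion $\supseteq$ is the classical fact that the constants and the functions in $C_0(G)$ are weakly almost periodic (the left orbit of a $C_0$-function is relatively weakly compact), so the real content is the reverse inclusion. Granting $\mathrm{WAP}(G) = C_0(G) \oplus \mathbb{C}1$, the mean formula follows at once: writing $\varphi = \varphi_0 + c1$ with $\varphi_0 \in C_0(G)$, we have $\lim_{g\to\infty}\varphi(g) = c$, while $m(\varphi) = m(\varphi_0) + c$, and $m(\varphi_0) = 0$ because $G$ is noncompact and the unique invariant mean on $\mathrm{WAP}(G)$ annihilates $C_0(G)$ (the zero constant lies in the weakly closed convex hull of the orbit of any $C_0$-function, and by uniqueness it is the mean). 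Hence $m(\varphi) = c = \lim_{g\to\infty}\varphi(g)$.

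For the hard inclusion $\mathrm{WAP}(G) \subseteq C_0(G) \oplus \mathbb{C}1$, I would exploit the identification of $\mathrm{WAP}(G)$ with the algebra of matrix coefficients of uniformly bounded representations on reflexive Banach spaces. Given $\varphi(g) = \langle \eta, \pi(g)\xi\rangle$ with $\pi \colon G \to \mathcal{B}(E)$ uniformly bounded and $E$ reflexive, the splitting $E = E^{\pi(G)} \oplus E_0$ into $\pi(G)$-invariant vectors and the canonical invariant complement (on which $\pi$ has no nonzero invariant vectors) gives $\xi = \xi_1 + \xi_0$ and therefore $\varphi(g) = \langle\eta,\xi_1\rangle + \langle\eta,\pi(g)\xi_0\rangle$. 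The first term is a constant, so it suffices to show that the remainder $\langle\eta,\pi(g)\xi_0\rangle$ vanishes at infinity, i.e.\ to establish a Howe--Moore type decay theorem in the reflexive Banach-space setting.

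The decay is the heart of the matter, and this is where the structure theory recalled above enters. Using the $KAK$ decomposition and compactness of $K$, one reduces to showing that $\langle \eta, \pi(a_n)\xi_0\rangle \to 0$ for any sequence $a_n \to \infty$ in a positive Weyl chamber of $A$. I would run the Mautner phenomenon: conjugation by $a_n$ contracts suitable root (unipotent) subgroups, and combined with Grothendieck's iterated-limit criterion for weak almost periodicity, this forces any cluster value of $\varphi$ along $a_n$ to be invariant under the contracted and expanded unipotents. Since these generate $G$ modulo its center, and the center is finite, the cluster value is $\pi(G)$-invariant, hence originates from $E^{\pi(G)}$ and equals the constant part; this is precisely the point at which ``no compact simple factors'' is used, so that every factor is noncompact with nontrivial unipotents (a compact factor would contribute almost periodic coefficients that need not lie in $C_0(G)$). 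This yields $\langle \eta, \pi(a_n)\xi_0\rangle \to 0$ and hence $\varphi(g) \to \langle\eta,\xi_1\rangle$ at infinity.

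The main obstacle I anticipate is precisely this Banach-space Howe--Moore step: the classical argument is spectral and Hilbertian, whereas here one has only a reflexive $E$ and a uniformly bounded, non-unitary $\pi$. I would therefore prefer to carry out the Mautner/contraction argument directly at the level of the weakly almost periodic function $\varphi$, using only relative weak compactness of its orbit together with Grothendieck's double-limit characterisation, rather than passing through spectral theory. This keeps the entire argument within the reflexive category and is, I expect, close in spirit to Veech's original route.
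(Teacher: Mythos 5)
The paper does not prove this statement at all: it is imported verbatim from Veech \cite[Theorem 1.4]{MR0550072}, so there is no internal argument to compare yours with. Judged on its own terms, your overall strategy is the right one and is indeed Veech's: the easy inclusion and the derivation of the mean formula from the decomposition are correct (for noncompact $G$ the invariant mean kills $C_0(G)$, since $0$ lies in the weakly closed convex hull of the left orbit of any $C_0$-function), and your instinct to run the Mautner/contraction argument directly on the weakly almost periodic function, rather than through a Banach-space Howe--Moore theorem, avoids the spectral obstruction you correctly identify.

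The core decay step, however, is asserted rather than proved, and the assertion hides the two genuinely hard points. First, the contraction argument (via two-sided uniform continuity of WAP functions, or equivalently joint continuity of the $G$-actions on the WAP compactification) gives only \emph{mixed} invariance: a cluster point $x$ of $(a_n)$ in the compactification satisfies $ux=x$ for $u$ in the horospherical subgroup contracted by $\mathrm{Ad}(a_n^{-1})$ and $xv=x$ for $v$ in the opposite one --- invariance under $N^+$ on the left and under $N^-$ on the right, not simultaneous invariance under both on the same side. The statement ``these subgroups generate $G$, hence the cluster value is invariant'' therefore does not close the argument; upgrading mixed invariance to constancy of $\hat\varphi$ on $\overline{GxG}$ is the actual content of Veech's proof and needs an extra input (density of the big Bruhat cell $N^{+}MAN^{-}$, or the idempotent structure of the enveloping semitopological semigroup). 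Second, if $a_n\to\infty$ only in a wall of the Weyl chamber, the contracted root subgroups generate merely a proper normal subgroup, so one must induct over the simple factors; this is where ``no compact simple factors'' is used factor by factor, whereas finiteness of the centre is what makes $K$ compact in $G=KAK$. (Minor: when all simple roots blow up, $N^{+}$ and $N^{-}$ generate $G$ itself, not just $G$ modulo its centre.)
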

\begin{prp} \label{prp:sl3sp2tstarp}
    For $1 < p < \infty$, the groups $\mathrm{SL}(3,\mathbb{R})$ and $\mathrm{Sp}(2,\mathbb{R})$ satisfy property (T$^*_{M_{\mathrm{cb}}A_p}$).
\end{prp}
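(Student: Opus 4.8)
The plan is to combine Veech's theorem (Theorem~\ref{thm:veech}) with a quantitative spectral-gap estimate for the two rank-two groups, phrasing weak$^*$-continuity of the mean as a norm-approximation statement in the predual. First I would observe that $G=\mathrm{SL}(3,\mathbb{R})$ and $G=\mathrm{Sp}(2,\mathbb{R})$ are connected simple Lie groups with finite centre that are noncompact, hence semisimple with finite centre and without compact simple factors. Theorem~\ref{thm:veech} therefore applies and yields $\mathrm{WAP}(G)=C_0(G)\oplus\mathbb{C}1$ together with $m(\varphi)=\lim_{g\to\infty}\varphi(g)$ for the unique invariant mean $m$. Since $M_{\mathrm{cb}}A_p(G)\subset\mathrm{WAP}(G)$, every $\varphi\in M_{\mathrm{cb}}A_p(G)$ splits as $\varphi=\varphi_0+m(\varphi)1$ with $\varphi_0\in C_0(G)\cap M_{\mathrm{cb}}A_p(G)$, and since $|m(\varphi)|\le\|\varphi\|_\infty\le\|\varphi\|_{M_{\mathrm{cb}}A_p}$ and $\|1\|_{M_{\mathrm{cb}}A_p}=1$, one has $\|\varphi_0\|_{M_{\mathrm{cb}}A_p}\le 2\|\varphi\|_{M_{\mathrm{cb}}A_p}$.

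Next I would reduce property (T$^*_{M_{\mathrm{cb}}A_p}$) to a norm-approximation of $m$. The mean is weak$^*$-continuous exactly when it lies in the predual $Q_{p,\mathrm{cb}}(G)$, regarded inside $(M_{\mathrm{cb}}A_p(G))^*=Q_{p,\mathrm{cb}}(G)^{**}$; as $Q_{p,\mathrm{cb}}(G)$ is norm-closed in its bidual, it suffices (the easy converse to Lemma~\ref{lem:meanapproximation}) to produce non-negative $f_n\in L^1(G)$ with $\|f_n\|_1=1$ and $\|f_n-m\|_{(M_{\mathrm{cb}}A_p(G))^*}\to 0$. Using the Veech decomposition and $\int_G f_n=1$, for $\varphi$ with $\|\varphi\|_{M_{\mathrm{cb}}A_p}\le1$ one computes
\[
\langle f_n-m,\varphi\rangle=\int_G f_n(g)\big(\varphi(g)-m(\varphi)\big)\,dg=\int_G f_n(g)\varphi_0(g)\,dg,
\]
so that $\|f_n-m\|_{(M_{\mathrm{cb}}A_p(G))^*}\le 2\sup\big\{\,\big\lvert\int_G f_n\psi\,\big\rvert : \psi\in C_0(G)\cap M_{\mathrm{cb}}A_p(G),\ \|\psi\|_{M_{\mathrm{cb}}A_p}\le1\,\big\}$. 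The whole statement thus reduces to exhibiting probability densities $(f_n)$ along which the functions in the unit ball of the Rajchman-type algebra $C_0(G)\cap M_{\mathrm{cb}}A_p(G)$ are integrated to zero, uniformly.

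The key input is this uniform decay, which encodes a spectral gap at the level of $QSL^p$-representations. Here I would invoke a strong property (T) estimate for $\mathrm{SL}(3,\mathbb{R})$ and $\mathrm{Sp}(2,\mathbb{R})$ (in the line of work of Lafforgue, de la Salle and Liao): for a suitable sequence of symmetric probability densities $f_n$ and every isometric representation of $G$ on a $QSL^p$-space without nonzero invariant vectors, the corresponding averaged operators tend to $0$ in norm, with control that persists at the Herz--Schur multiplier level. Realising $\psi\in C_0(G)\cap M_{\mathrm{cb}}A_p(G)$ through bounded $QSL^p$-valued maps $\alpha,\beta$ as in Proposition~\ref{prp:charpcbfm}, this estimate should deliver $\sup_\psi\big\lvert\int_G f_n\psi\,\big\rvert\to0$, completing the argument. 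If the available estimate only covers $p$ in a neighbourhood of $2$, I would then propagate the conclusion to all $1<p<\infty$ using Proposition~\ref{prp:tstarpq}.

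The hard part is genuinely the transfer of the spectral gap from coefficients of isometric $QSL^p$-representations (that is, from $B_p(G)$) to the larger Herz--Schur multiplier algebra $M_{\mathrm{cb}}A_p(G)$, together with uniformity over the unit ball. No soft functional-analytic shortcut is available, because $C_0(G)$ is not $\sigma(L^\infty(G),L^1(G))$-closed in $L^\infty(G)$: Veech's theorem only supplies the qualitative identification $m(\varphi)=\lim_{g\to\infty}\varphi(g)$, whereas the quantitative rate along $(f_n)$ must be extracted from the higher-rank rigidity of the two groups.
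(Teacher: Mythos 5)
Your overall architecture is the same as the paper's: Veech's theorem reduces property (T$^*_{M_{\mathrm{cb}}A_p}$) for these two groups to a quantitative statement in the predual norm, namely that suitable probability densities $f_n$ converge to the mean in $M_{\mathrm{cb}}A_p(G)^*$, which in turn reduces to uniform decay of $\big\lvert\int_G f_n\psi\big\rvert$ over the unit ball of $C_0(G)\cap M_{\mathrm{cb}}A_p(G)$. All of these reductions are correct (the bound $\|\varphi_0\|\leq 2\|\varphi\|$, the equivalence of weak$^*$-continuity with membership of $m$ in the norm-closure of $L^1(G)$ inside $M_{\mathrm{cb}}A_p(G)^*$, and the computation of $\langle f_n-m,\varphi\rangle$ are all fine). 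However, the proof stops exactly at the step that carries all the analytic content: the uniform decay estimate is invoked as a black box, and you yourself flag that you do not know how to pass from a spectral-gap statement for isometric $QSL^p$-representations to one for the full multiplier algebra. That transfer is precisely what cannot be done softly, so as written the argument is incomplete.

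The gap is closed in the paper by citing Vergara \cite{MR3706609}, whose Theorems 1.3 and 1.4 prove \emph{directly at the multiplier level} that for $G=\mathrm{SL}(3,\mathbb{R})$ or $\mathrm{Sp}(2,\mathbb{R})$ with maximal compact $K$, the $K$-bi-averaged point masses $m_g$ (i.e.\ $\varphi\mapsto\int_K\int_K\varphi(k_1gk_2)\,dk_1dk_2$) form a Cauchy net in $M_{\mathrm{cb}}A_p(G)^*$ whose limit lies in $Q_{p,\mathrm{cb}}(G)$; the paper then only needs Veech's theorem to identify that limit with the invariant mean. Your worry about a ``transfer from $B_p(G)$ to $M_{\mathrm{cb}}A_p(G)$'' is based on a mischaracterisation of the available literature: the higher-rank rigidity estimates of Lafforgue--de la Salle and Haagerup--de Laat (for $p=2$) and of Vergara (for general $p$) are proved for $K$-bi-invariant functions admitting a factorisation $\varphi(hg^{-1})=\langle\beta(h),\alpha(g)\rangle$ through a $QSL^p$-space as in Proposition~\ref{prp:charpcbfm} --- that is, for elements of $M_{\mathrm{cb}}A_p(G)$ itself, with uniformity over the unit ball --- so no such transfer is required. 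Had you cited Vergara's result in this precise form, your argument would essentially coincide with the paper's; without it, the proof is a correct reduction to an unproved (though true) statement.
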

The proposition essentially follows from \cite[Theorem 1.3 and Theorem 1.4]{MR3706609}. Indeed, in these theorems, Vergara proves that the groups $\mathrm{SL}(3,\mathbb{R})$ and $\mathrm{Sp}(2,\mathbb{R})$ do not satisfy the $p$-AP for $1 < p < \infty$. The $p$-AP was introduced in \cite{MR2652178} as a $p$-analogue of the Approximation Property of Haagerup and Kraus (AP), which goes back to \cite{MR1220905}. It was proved by Lafforgue and de la Salle that $\mathrm{SL}(3,\mathbb{R})$ does not have the AP \cite{MR2838352}. This was generalised to all connected simple Lie groups with real rank at least $2$, including the group $\mathrm{Sp}(2,\mathbb{R})$, by Haagerup and the first-named author in \cite{MR3047470}, \cite{MR3453357}.

We now present a brief proof sketch of Proposition \ref{prp:sl3sp2tstarp}, relying on Vergara's work.
\begin{proof}[Proof sketch of Proposition \ref{prp:sl3sp2tstarp}]
In fact, what is proved in \cite[Theorem 1.3 and Theorem 1.4]{MR3706609} is  stronger than the fact that the groups $\mathrm{SL}(3,\mathbb{R})$ and $\mathrm{Sp}(2,\mathbb{R})$ do not have the $p$-AP for $1 < p < \infty$. We explain this for the group $\mathrm{SL}(3,\mathbb{R})$. The case $\mathrm{Sp}(2,\mathbb{R})$ follows analogously.

Let $G=\mathrm{SL}(3,\mathbb{R})$, and let $K=\mathrm{SO}(3)$ (its maximal compact subgroup). For $g \in G$, let $m_g$ be the measure defined by
\[
    \int_G \varphi\, dm_g = \int_K \int_K \varphi(k_1gk_2)dk_1dk_2.
\]
Vergara shows that $(m_g)$ is a Cauchy net in $M_{\mathrm{cb}}A_p(G)^*$, which converges to a mean $\mu$ on $M_{\mathrm{cb}}A_p(G)$. Additionally, he proves that $\mu$ is actually an element of $Q_{p,\mathrm{cb}}(G)$, i.e.~the mean $\mu$ is weak$^*$-continuous.

We now show that $\mu$ actually coincides with the unique invariant mean $m$ on $M_{\mathrm{cb}}A_p(G)$. From the characterisation of $M_{\mathrm{cb}}A_p(G)$ of Proposition \ref{prp:charpcbfm}, it follows that the function $\Phi \colon g \mapsto \int_K \int_K \varphi(k_1gk_2)dk_1dk_2$ is an element of $M_{\mathrm{cb}}A_p(G)$. Indeed, writing $\varphi(h^{-1}g)=\langle \beta(h),\alpha(g) \rangle$, we can move each integral over $K$ into one of the maps $\alpha$ and $\beta$. Recall that $M_{\mathrm{cb}}A_p(G)$ is contained in $\mathrm{WAP}(G)$ (see Section \ref{sec:preliminaries}). By Theorem \ref{thm:veech}, it now follows that
\begin{equation} \label{eq:veechexample}
    \lim_{g \to \infty} \Phi(g) = m(\Phi)
\end{equation}
Now, let $\varphi \in M_{\mathrm{cb}}A_p(G)$. For all $g \in G$, we have
\[
    |\mu(\varphi)-m(\varphi)| \leq |\mu(\varphi)-m_g(\varphi)| + |m_g(\varphi)-m(\varphi)|.
\]
By Vergara's result, asserting that $(m_g)$ converges to $\mu$ in $M_{\mathrm{cb}}A_p(G)^*$, the first term of the right-hand side tends to $0$ as $g \to \infty$, and by \eqref{eq:veechexample}, the second term tends to $0$. This shows that the weak$^*$-continuous mean $\mu$ coincides with the left-invariant mean $m$, showing that $\mu$ is indeed left-invariant.
\end{proof}

\begin{rem}
Note that the fact that $\mathrm{SL}(3,\mathbb{R})$ and $\mathrm{Sp}(2,\mathbb{R})$ have property (T$^*$) (which is exactly property (T$^*_{M_{\mathrm{cb}}A_p}$) for $p=2$) was proved in \cite{MR3552017}. Alternative to the proof sketch of Proposition \ref{prp:sl3sp2tstarp} given above, one can modify the proofs from \cite{MR3552017} to the $p$-setting. This would require some straightforward technical modifications.
\end{rem}

We are now ready to prove the main result of this section.
\begin{thm} \label{thm:Tstarpsimplegroups}
    Let $G$ be a connected simple Lie group with finite center, and let $1 < p < \infty$. Then $G$ has property (T$^*_{M_{\mathrm{cb}}A_p}$) if and only if the real rank of $G$ is $0$ or at least $2$.
\end{thm}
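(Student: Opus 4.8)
The plan is to split along the three possible ranges of the real rank, using Veech's theorem (Theorem \ref{thm:veech}) as the common tool. Since a non-compact connected simple Lie group $G$ with finite center has no compact simple factors, Theorem \ref{thm:veech} applies: the unique invariant mean $m$ on $M_{\mathrm{cb}}A_p(G)$ satisfies $m(\varphi)=\lim_{g\to\infty}\varphi(g)$, so $m$ annihilates $M_{\mathrm{cb}}A_p(G)\cap C_0(G)$ and fixes the constants. The three cases are real rank $0$ (where $G$ is compact), real rank $\geq 2$ (where I would establish property (T$^*_{M_{\mathrm{cb}}A_p}$)), and real rank $1$ (where I would show it fails).

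Rank $0$ is immediate: such a $G$ has $\mathfrak{g}=\mathfrak{k}$ and is therefore compact, and compact groups have property (T$^*_{M_{\mathrm{cb}}A_p}$) as noted after Definition \ref{dfn:tstarx}. For rank $1$, I would use that every real rank $1$ connected simple Lie group with finite center has the $p$-AP (these groups are weakly amenable, and the corresponding estimates persist for all $1<p<\infty$). Assume for contradiction that $m$ is weak$^*$-continuous, i.e.\ $m\in Q_{p,\mathrm{cb}}(G)$. By the $p$-AP there is a net $(u_i)$ in $A_p(G)$ with $u_i\to 1$ in the $\sigma(M_{\mathrm{cb}}A_p(G),Q_{p,\mathrm{cb}}(G))$-topology, so weak$^*$-continuity of $m$ gives $m(u_i)\to m(1)=1$. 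But $A_p(G)\subseteq C_0(G)$, so each $u_i\in M_{\mathrm{cb}}A_p(G)\cap C_0(G)$ and Theorem \ref{thm:veech} yields $m(u_i)=\lim_{g\to\infty}u_i(g)=0$; this contradiction shows $G$ lacks property (T$^*_{M_{\mathrm{cb}}A_p}$). (For the rank $1$ groups without property (T), namely $\mathrm{SO}(n,1)$ and $\mathrm{SU}(n,1)$, the failure also follows at once from Proposition \ref{prp:Tstarcomparison}.)

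For rank $\geq 2$, the plan is to generalize Proposition \ref{prp:sl3sp2tstarp}. With $K$ a maximal compact subgroup and $m_g\colon\varphi\mapsto\int_K\int_K\varphi(k_1gk_2)\,dk_1\,dk_2$ the $K$-bi-invariant averaging functionals, everything rests on the quantitative estimate that $(m_g)$ is norm-Cauchy as $g\to\infty$ in $M_{\mathrm{cb}}A_p(G)^*$, with limit $\mu\in Q_{p,\mathrm{cb}}(G)$ --- the strong failure of the $p$-AP. Granting this, I would argue exactly as in the proof sketch of Proposition \ref{prp:sl3sp2tstarp}: for $\varphi\in M_{\mathrm{cb}}A_p(G)$ the $K$-bi-averaged function $\Phi$ again lies in $M_{\mathrm{cb}}A_p(G)\subseteq\mathrm{WAP}(G)$ (by moving each $K$-integral into the maps $\alpha,\beta$ of Proposition \ref{prp:charpcbfm}), so Theorem \ref{thm:veech} gives $\lim_{g\to\infty}\Phi(g)=m(\Phi)$; combining this with $m_g\to\mu$ forces $\mu=m$, so $m$ is weak$^*$-continuous and $G$ has property (T$^*_{M_{\mathrm{cb}}A_p}$).

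The main obstacle is precisely this convergence of $(m_g)$ for \emph{every} higher-rank group, rather than only for the two model groups $\mathrm{SL}(3,\mathbb{R})$ and $\mathrm{Sp}(2,\mathbb{R})$ treated by Vergara. It amounts to controlling, uniformly in norm, the behavior at infinity of $K$-bi-invariant $p$-completely bounded multipliers in every direction of the flat $A$. I would obtain this as the $p$-analogue of the Lafforgue--de la Salle and Haagerup--de Laat estimates: using that the restricted root system of a rank $\geq 2$ simple Lie group contains a subsystem of type $A_2$ or $C_2$ (giving a subgroup locally isomorphic to $\mathrm{SL}(3,\mathbb{R})$ or $\mathrm{Sp}(2,\mathbb{R})$), the spherical analysis reduces to rank $2$ computations, but carried out on $G$ itself rather than merely transferred from a subgroup along the contractive restriction maps. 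This delicate harmonic analysis is the genuinely hard input, and it is exactly what separates the rank $1$ case, where the $p$-AP holds, from the higher-rank case, where it fails strongly.
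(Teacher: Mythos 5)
Your treatment of real rank $0$ is correct, and your real rank $1$ argument is a workable (slightly different) route: the paper instead reduces to $p=2$ via Proposition \ref{prp:tstarpq} and quotes the fact that AP together with property (T$^*$) forces compactness, which avoids having to justify that every rank-one group (including $\mathrm{Sp}(n,1)$ and $F_{4(-20)}$, which your parenthetical via Proposition \ref{prp:Tstarcomparison} does not cover) has the $p$-AP. That justification is available from weak amenability plus the contractive embedding $M_{\mathrm{cb}}A_2(G)\subset M_{\mathrm{cb}}A_p(G)$, but you should say so.

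The genuine gap is in the rank $\geq 2$ case. Your plan requires the norm-Cauchy convergence of the $K$-bi-invariant averages $(m_g)$ in $M_{\mathrm{cb}}A_p(G)^*$ for \emph{every} higher-rank simple $G$, i.e.\ the $p$-analogue of the Lafforgue--de la Salle and Haagerup--de Laat estimates in full generality. This is not in the literature (Vergara proves it only for $\mathrm{SL}(3,\mathbb{R})$ and $\mathrm{Sp}(2,\mathbb{R})$), you do not supply it, and it is a substantial piece of spherical harmonic analysis --- so as written the hardest case of the theorem is asserted rather than proved. Moreover, your stated reason for attempting it, namely that the argument must be ``carried out on $G$ itself rather than merely transferred from a subgroup,'' is exactly backwards for this statement: the paper's proof is precisely a soft transfer. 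One takes a closed subgroup $H\leq G$ locally isomorphic to $\mathrm{SL}(3,\mathbb{R})$ or $\mathrm{Sp}(2,\mathbb{R})$, forms the map $T\varphi=(u*\varphi)\big\vert_H$, which is weak$^*$-weak$^*$-continuous on $M_{\mathrm{cb}}A_p$-level by Lemma \ref{lem:grouphomomorphism}, and observes via Theorem \ref{thm:veech} (applied to both $G$ and $H$, using $T(C_0(G))\subset C_0(H)$ and $T1=1$) that $m_G=m_H\circ T$ on $\mathrm{WAP}(G)$. Weak$^*$-continuity of $m_H$ on $M_{\mathrm{cb}}A_p(H)$ (Proposition \ref{prp:sl3sp2tstarp}) then yields weak$^*$-continuity of $m_G$ on $M_{\mathrm{cb}}A_p(G)$ with no further estimates. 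Unlike an approximation property, the mean is a single functional pinned down by Veech's theorem, so it can be pulled back through the restriction map; this is the idea your proposal is missing.
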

\begin{proof}
Let $G$ be a connected simple Lie group with finite center. Then $G$ has real rank $0$ if and only if it is compact, in which case it has property (T$^*_{M_{\mathrm{cb}}A_p}$) (see Section \ref{sec:weakstarcontinuity}). If $G$ has real rank $1$, then it is well known that $G$ is weakly amenable \cite{MR0996553}, and hence it has the AP of Haagerup and Kraus, so it cannot have property (T$^*$), since by \cite[Proposition 5.5]{MR3552017}, a locally compact group having the AP and property (T$^*$) has to be compact. By Proposition \ref{prp:tstarpq}, the group $G$ cannot have  property (T$^*_{M_{\mathrm{cb}}A_p}$) for any $p \in (1,\infty)$.

Hence, it only remains to consider the case of real rank at least $2$, which follows in the same way as \cite[Theorem D]{MR3552017}. Let us give a brief argument. Let $G$ be a connected simple Lie group with finite center and real rank at least $2$. It is well known that $G$ contains a closed subgroup $H$ that is locally isomorphic to $\mathrm{SL}(3,\mathbb{R})$ or $\mathrm{Sp}(2,\mathbb{R})$ (i.e.~the Lie algebra of $H$ is isomorphic to the Lie algebra of $\mathrm{SL}(3,\mathbb{R})$ or to the Lie algebra of $\mathrm{Sp}(2,\mathbb{R})$). Let $m_H$ denote the invariant mean on $\mathrm{WAP}(H)$, the restriction of which to $M_{\mathrm{cb}}A_p(H)$ is weak$^*$-continuous by Proposition \ref{prp:sl3sp2tstarp}.

We define a map from $\mathrm{WAP}(G)$ to $\mathrm{WAP}(H)$ in the following way, similar to Lemma \ref{lem:grouphomomorphism}. Let $u \in C_c(G)$ be a non-negative function such that $\|u\|_1=1$ and $u^*=u$. Define the map $T \colon \mathrm{WAP}(G) \to \mathrm{WAP}(H)$ by $T\varphi = (u * \varphi)\big\vert_H$. Since $T(C_0(G)) \subset C_0(H)$, we have $(m_H \circ T) \vert_{C_0(G)} \equiv 0$. Using Theorem \ref{thm:veech}, it follows that the mean $m_G$ on $\mathrm{WAP}(G)$ is given by $m_G=m_H \circ T$. By Lemma \ref{lem:grouphomomorphism}, we see that the restriction of the map $T$ to $M_{\mathrm{cb}}A_p(G)$ is weak$^*$-weak$^*$-continuous, and the result follows.
\end{proof}

\section{Property (T$^*_{M_{\mathrm{cb}}A_p}$) for connected Lie groups}
In this section, we study property (T$^*_{M_{\mathrm{cb}}A_p}$) for connected Lie groups. Let us first recall some structure theory for connected Lie groups, additional to the structure theory for semisimple Lie groups which we recalled before.

Let $G$ be a connected Lie group with Lie algebra $\mathfrak{g}$. If $\mathfrak{r}$ is the solvable radical (i.e.~the largest solvable ideal) of $\mathfrak{g}$, then the Lie algebra $\mathfrak{g}$ can be written as $\mathfrak{g}=\mathfrak{r} \rtimes \mathfrak{s}$, where $\mathfrak{s}$ is a semisimple Lie subalgebra. This decomposition is called the Levi decomposition. Furthermore, as was recalled at the beginning of Section \ref{sec:tstarpsimple}, we can decompose $\mathfrak{s}$ as $\mathfrak{s}=\mathfrak{s}_1 \oplus \cdots \oplus \mathfrak{s}_n$, where the $\mathfrak{s}_i$'s, with $i=1,\ldots,n$, are simple Lie algebras. Let $R$, $S$ and $S_i$, with $i=1,\ldots,n$, be the analytic subgroups of $G$ corresponding to the Lie \hyphenation{al-ge-bras}algebras $\mathfrak{r}$, $\mathfrak{s}$ and $\mathfrak{s}_i$, respectively. Thus, $R$ is a solvable closed normal subgroup of $G$, and $S$ is a (maximal) semisimple analytic subgroup of $G$. However, $S$ is not necessarily a closed subgroup, and in general, we have $G=RS$, but $R \cap S$ can consist of more than one element. However, for simply connected $G$, the subgroup $S$ is, in fact, closed, and the Levi decomposition becomes a semidirect product $G=R \rtimes S$. For details, see \cite[Section 3.18]{MR0746308}.

A covering group of a connected Lie group $G$ is a pair consisting of a Lie group $\widetilde{G}$ and a Lie group homomorphism $\sigma \colon \widetilde{G} \rightarrow G$ that is surjective, in such a way that $(\widetilde{G},\sigma)$ is also a topological covering space. Every connected Lie group $G$ has a universal (i.e.~simply connected) covering group $\widetilde{G}$ (unique up to isomorphism). We refer to \cite[Section I.11]{MR1920389} for more information.

A connected Lie group $G$ is called reductive if its Lie algebra $\mathfrak{g}$ is reductive, i.e.~the direct sum of an abelian Lie algebra and a semisimple Lie algebra. Equivalently, the solvable radical $\mathfrak{r}$ of $\mathfrak{g}$ coincides with the center of $\mathfrak{g}$. If $G$ is reductive, the analytic subgroup $R$ is an abelian Lie group contained in the center $Z(G)$ of $G$. (The center can, however, be larger than $R$.)

Note that for linear algebraic groups over algebraically closed fields, the property of being reductive is defined differently (see e.g.~\cite{MR2458469}). We will not further discuss this.

Before we get to the main theorem, we prove some auxiliary results.
\begin{lem} \label{lem:wstarclosedquotient}
Let $G$ be a locally compact group with a compact normal subgroup $K$. Then $M_{\mathrm{cb}}A_p(G) \cap C_0(G)$ is weak$^*$-closed in $M_{\mathrm{cb}}A_p(G)$ if and only if $M_{\mathrm{cb}}A_p(G/K) \cap C_0(G/K)$ is weak$^*$-closed in $M_{\mathrm{cb}}A_p(G/K)$.
\end{lem}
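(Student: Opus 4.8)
The plan is to transfer weak$^*$-closedness along the quotient map $q\colon G\to G/K$, exploiting that $K$ is compact so that $q$ is proper and carries a weak$^*$-continuous averaging. Write $A=M_{\mathrm{cb}}A_p(G)$, $C=A\cap C_0(G)$, $A_K=M_{\mathrm{cb}}A_p(G/K)$ and $C_K=A_K\cap C_0(G/K)$. Since $K$ is normal, a continuous function on $G$ is right $K$-invariant exactly when it descends to $G/K$, and inflation $q^*\colon\psi\mapsto\psi\circ q$ maps $A_K$ isometrically onto the subspace $A^K\subseteq A$ of $K$-invariant multipliers. First I would verify that $q^*$ is weak$^*$-weak$^*$-continuous: it is the adjoint of the fibrewise integration $q_*\colon L^1(G)\to L^1(G/K)$,
\[
(q_*f)(gK)=\int_K f(gk)\,dk,
\]
which is contractive for the predual norms because $\langle q^*\psi,f\rangle=\langle\psi,q_*f\rangle$ and $q^*$ is isometric, so $q_*$ extends to a metric surjection $Q_{p,\mathrm{cb}}(G)\to Q_{p,\mathrm{cb}}(G/K)$ whose adjoint $q^*$ is a weak$^*$-homeomorphism onto its range $A^K$. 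As $q$ is proper, $\psi\in C_0(G/K)$ if and only if $q^*\psi\in C_0(G)$, so $C_K=(q^*)^{-1}(C)$ and $q^*$ identifies $(A_K,C_K)$ weak$^*$-homeomorphically with $(A^K,C\cap A^K)$.

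Next I would introduce the averaging projection $P$ given on $A$ by $P\varphi(g)=\int_K\varphi(gk)\,dk$. Since translations act isometrically on $M_{\mathrm{cb}}A_p(G)$, the operator $P=\int_K R_k\,dk$ is a contraction, and because $K$ is normal $P\varphi$ is bi-$K$-invariant, so $P$ is a projection onto $A^K$; it is weak$^*$-continuous, being the adjoint of the analogous averaging on $Q_{p,\mathrm{cb}}(G)$. Compactness of $K$ gives $P(C)=C^K:=C\cap A^K$. In particular $A^K=\ker(\mathrm{id}-P)$ is weak$^*$-closed and $A=A^K\oplus\ker P$ is a weak$^*$-topological direct sum. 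A short computation shows that $C$ is graded for this splitting, $C=C^K\oplus C^0$ with $C^0:=C\cap\ker P$, and hence $C=P^{-1}(C^K)\cap(\mathrm{id}-P)^{-1}(C^0)$. Consequently $C$ is weak$^*$-closed if and only if both $C^K$ and $C^0$ are weak$^*$-closed. The forward implication of the lemma is now immediate: if $C$ is weak$^*$-closed, then $C^K=C\cap A^K$ is weak$^*$-closed, hence so is $C_K$ under $q^*$.

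For the converse I would assume $C_K$, equivalently $C^K$, weak$^*$-closed; by the previous paragraph it then suffices to prove that the non-invariant part $C^0$ is weak$^*$-closed. Here I would refine the splitting using the Peter--Weyl decomposition of the right regular action of the compact group $K$: for $\pi\in\widehat{K}$ the isotypic projection
\[
P_\pi\varphi(g)=d_\pi\int_K\varphi(gk)\,\overline{\chi_\pi(k)}\,dk
\]
is a bounded, weak$^*$-continuous operator on $A$ (again an adjoint of a predual operator), it preserves $C$ because $K$ is compact, $P_{\mathrm{triv}}=P$, and the ranges $A_\pi$ span a weak$^*$-dense part of $\ker P$. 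Working on bounded sets via the Krein--Smulian theorem, I would take a bounded net $\psi_\alpha\in C^0$ with $\psi_\alpha\to\psi$ weak$^*$ and aim to show $\psi\in C_0(G)$; applying $P_\pi$ exhibits $P_\pi\psi$ as a weak$^*$-limit of $P_\pi\psi_\alpha\in C\cap A_\pi$. If each piece $C\cap A_\pi$ were known to be weak$^*$-closed, this would give $P_\pi\psi\in C_0(G)$ for every $\pi$, and then $\psi\in C_0(G)$ would follow since an element of $\mathrm{WAP}(G)/C_0(G)$ all of whose $K$-isotypic components vanish must be zero.

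The hard part will be exactly this last reduction: transferring weak$^*$-closedness from the trivial isotypic piece $C^K$ to the higher pieces $C\cap A_\pi$. The naive idea of modulating $A_\pi$ back to $A^K$ by a multiplier on $G$ whose restriction to $K$ is a matrix coefficient of $\pi$ fails in general, because an irreducible representation of $K$ need not extend to $G$ — already a character of $K$ need not extend to a character of $G$. I therefore expect the technical heart to be a careful Peter--Weyl bookkeeping that uses the normality of $K$ together with the $A^K$-module structure of $\ker P$ (note $A^K\cdot\ker P\subseteq\ker P$, since $P(f\psi)=fP\psi$ for $K$-invariant $f$): for instance, the $K$-invariant sesquilinear combinations $\sum_l\varphi^{(il)}\overline{\varphi^{(jl)}}$ of the matrix entries $\varphi^{(ij)}=d_\pi\int_K\varphi(\cdot\,k)\overline{\pi_{ij}(k)}\,dk$ do land in $A^K$ and detect membership in $C_0(G)$, but being only sesquilinear they do not by themselves transport weak$^*$-limits. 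Making this transfer precise, in combination with Krein--Smulian so as to stay on bounded sets where weak$^*$-convergence is governed by pointwise behaviour, is where the real work lies; once the higher isotypic pieces are controlled, the vanishing-of-components argument in $\mathrm{WAP}(G)/C_0(G)$ closes the proof.
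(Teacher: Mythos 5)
Your forward direction (weak$^*$-closedness of $M_{\mathrm{cb}}A_p(G)\cap C_0(G)$ implies that of $M_{\mathrm{cb}}A_p(G/K)\cap C_0(G/K)$) is complete and agrees with the paper: the inflation $q^*\colon\psi\mapsto\psi\circ q$ is weak$^*$-weak$^*$-continuous (the paper gets this from the proof of \cite[Proposition 5.1]{MR3706609}), $q$ is proper because $K$ is compact, and $C_K=(q^*)^{-1}(C)$ pulls back a weak$^*$-closed set. Your reduction of the whole lemma via the averaging projection $P$, namely $C=C^K\oplus C^0$ and ``$C$ is weak$^*$-closed iff $C^K$ and $C^0$ both are'', is also correct as far as it goes.

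The genuine gap is the converse implication. Having reduced it to showing that $C^0=C\cap\ker P$ is weak$^*$-closed, you introduce the isotypic projections $P_\pi$, $\pi\in\widehat{K}$, and then explicitly leave unproved the one step on which everything hinges: that weak$^*$-closedness of the trivial isotypic piece $C^K$ (which is what the hypothesis on $G/K$ gives you) can be transferred to the higher pieces $C\cap A_\pi$. Everything from ``If each piece $C\cap A_\pi$ were known to be weak$^*$-closed\dots'' onwards is conditional, and you yourself explain why the obvious modulation trick fails (irreducible representations of $K$ need not extend to $G$). So the ``if'' direction of the lemma is not proved. For comparison, the paper's proof of this direction is a two-line argument: take a weak$^*$-convergent net $\varphi_i\to\varphi$ in $C$, apply the weak$^*$-continuous averaging $\widetilde{T}\psi(gK)=\int_K\psi(gk)\,dk$, conclude $\widetilde{T}\varphi\in C_0(G/K)$ from the hypothesis, and deduce $\varphi\in C_0(G)$. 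That final deduction is precisely the implication whose validity your isotypic analysis calls into question -- for a general bounded continuous function, decay at infinity of the $K$-average does not by itself force decay of the function -- so you have correctly located the delicate point of the lemma; but locating it is not resolving it, and as written your proposal does not establish the converse.
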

\begin{proof}
   Suppose that $M_{\mathrm{cb}}A_p(G)\cap C_0(G)$ is weak$^*$-closed in $M_{\mathrm{cb}}A_p(G)$. Let $(\varphi_i)$ be a net in $M_{\mathrm{cb}}A_p(G/K)\cap C_0(G/K)$ that converges to $\varphi \in M_{\mathrm{cb}}A_p(G/K)$ in the weak$^*$-topology on $M_{\mathrm{cb}}A_p(G/K)$. Let $q \colon G\to G/K$ be the canonical quotient map. From the proof of \cite[Proposition 5.1]{MR3706609}, it follows that the operator 
   \[
   T \colon M_{\mathrm{cb}}A_p(G/K)\to M_{\mathrm{cb}}A_p(G), \; \psi \mapsto \psi \circ q
   \]
   is weak$^*$-weak$^*$-continuous. Therefore, $(T(\varphi_i))$ is a net in $M_{\mathrm{cb}}A_p(G)\cap C_0(G)$ that converges in the weak$^*$-topology to $T(\varphi) \in M_{\mathrm{cb}}A_p(G)$. Since $M_{\mathrm{cb}}A_p(G)\cap C_0(G)$ is weak$^*$-closed in $M_{\mathrm{cb}}A_p(G)$, we have $T(\varphi) \in C_0(G)$. Hence, $\varphi$ belongs to $C_0(G/K)$.
   
   For the converse, let $(\varphi_i)$ be a net in $M_{\mathrm{cb}}A_p(G)\cap C_0(G)$ that converges to $\varphi$ in $M_{\mathrm{cb}}A_p(G)$ in the weak$^*$-topology of $M_{\mathrm{cb}}A_p(G)$. Again, it follows from the proof of \cite[Proposition 5.1]{MR3706609} that the operator 
   \[
    \widetilde{T} \colon M_{\mathrm{cb}}A_p(G)\to M_{\mathrm{cb}}A_p(G/K),
   \]
  defined by $\widetilde{T}(\psi)(gK)=\int_{K}\psi(gk) dk$ for $\psi \in M_{\mathrm{cb}}A_p(G)$ and $g\in G$, is weak$^*$-weak$^*$-continuous. Hence, the net $(\widetilde{T}(\varphi_i))$ in $M_{\mathrm{cb}}A_p(G/K)\cap C_0(G/K)$ converges in the weak$^*$-topology to $\widetilde{T}(\varphi)$ in $M_{\mathrm{cb}}A_p(G/K)$. By assumption, $\widetilde{T}(\varphi)$ belongs to $C_0(G/K)$. Hence, $\varphi \in C_0(G)$.
\end{proof}
The following is a generalisation of \cite[Lemma 2.5]{MR1401762}.
\begin{lem} \label{lem:semisimple}
Let $G$ be a connected Lie group with connected closed normal subgroup $N$. Suppose that $M_{\mathrm{cb}}A_p(G) \cap C_0(G)$ is weak$^*$-closed in $M_{\mathrm{cb}}A_p(G)$. Then the center $Z(N)$ of $N$ is compact and $N / Z(N)$ is semisimple.
\end{lem}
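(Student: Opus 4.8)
\emph{The plan is to argue by contraposition}, showing that if $N$ fails to have the stated structure then $M_{\mathrm{cb}}A_p(G)\cap C_0(G)$ is not weak$^*$-closed. I would first reformulate the conclusion structurally: for a connected Lie group $N$, the quotient $N/Z(N)$ is semisimple precisely when the solvable radical $R$ of $N$ is contained in $Z(N)$, i.e.\ when $N$ is reductive (since $\mathrm{rad}(N/Z(N))=RZ(N)/Z(N)$, which is trivial iff $R\subseteq Z(N)$, and then $\mathfrak r=\mathfrak z(\mathfrak n)$). Thus the conclusion is equivalent to saying that $N$ is reductive with compact centre, and there are exactly two defects to exclude: (A) $R\not\subseteq Z(N)$ (the radical is not central), and (B) $Z(N)$ is non-compact. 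A useful observation throughout is that $Z(N)$, its identity component $Z(N)^\circ$, and $R$ are characteristic in $N$ and hence normal in $G$; this lets me exploit the conjugation action of $G$ on these subgroups and, via Lemma~\ref{lem:wstarclosedquotient}, reduce modulo compact normal subgroups whenever convenient.

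The engine of the argument is a construction producing, in the presence of a defect, a bounded sequence $(\varphi_n)$ in $M_{\mathrm{cb}}A_p(G)\cap C_0(G)$ that converges in the weak$^*$-topology to a function not lying in $C_0(G)$. Since $L^1(G)$ has dense image in the predual $Q_{p,\mathrm{cb}}(G)$ and the $\varphi_n$ are norm-bounded, it suffices to arrange bounded pointwise convergence: by dominated convergence, $\int_G f\varphi_n\,dg\to\int_G f\varphi\,dg$ for every $f\in L^1(G)$ then gives weak$^*$-convergence. In the generic situation I would extract from the defect a non-compact vector subgroup $V\cong\mathbb{R}^a$ ($a\ge 1$) inside $R$, normal in $G$; in case (A) the conjugation action of $G$ on $V$ (equivalently, the dual action on $\widehat V$) has a nontrivial contracting/expanding direction, while in case (B) a non-compact $V$ already sits centrally. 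One then forms the matrix coefficients $\varphi_n(g)=\langle \sigma(g)f_n,f_n\rangle$ of the unitary representation $\sigma$ of $G$ on $L^2(\widehat V)$ associated with this action. Being unitary coefficients, the $\varphi_n$ lie in $B(G)\subseteq M_{\mathrm{cb}}A_p(G)$ with $\|\varphi_n\|_{M_{\mathrm{cb}}A_p}\le\|f_n\|_2^2=1$; choosing the spectral data $f_n$ absolutely continuous and without mass at the fixed point $0\in\widehat V$ keeps each $\varphi_n$ in $C_0(G)$ (the relevant action is mixing off the fixed point), while letting $|f_n|^2$ concentrate towards $0$ drives $\varphi_n$ pointwise to a bounded continuous limit that is \emph{not} in $C_0(G)$ (in the unimodular case, the constant function $1$). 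This contradicts weak$^*$-closedness.

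The remaining case is when $Z(N)^\circ$ is compact but $Z(N)$ is non-compact, the non-compactness coming from infinitely many connected components — an infinite discrete central subgroup arising from a semisimple factor with infinite centre (the model being $\widetilde{\mathrm{SL}(2,\mathbb{R})}$). Here I would instead use that such a factor has real rank $1$ and is therefore weakly amenable with Cowling--Haagerup constant $1$ \cite{MR0996553}: this furnishes a net $u_n\in A_p\subseteq C_0\cap M_{\mathrm{cb}}A_p$ of the relevant group with $\|u_n\|_{M_{\mathrm{cb}}A_p}\le 1$ and $u_n\to 1$ uniformly on compacta, again yielding a bounded sequence of $C_0$-coefficients converging weak$^*$ to the non-$C_0$ constant $1$. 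For the passage between $G$ and its semisimple subquotients I would use Theorem~\ref{thm:veech}, which identifies the limiting behaviour ($\mathrm{WAP}=C_0\oplus\mathbb{C}1$, with the mean given by the limit at infinity) and pins down that the constructed limit genuinely leaves $C_0$.

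\emph{The hard part} is the construction itself, and in particular realising the approximating coefficients on all of $G$ rather than merely on the relevant normal subgroup, with \emph{simultaneous} control of the $p$-completely bounded norm and of the $C_0$-decay. The tension is intrinsic: to obtain a weak$^*$-limit outside $C_0(G)$ one must push the $\varphi_n$ towards the trivial representation (an invariant vector, whose coefficient is the non-$C_0$ constant), yet each individual $\varphi_n$ must remain a genuine $C_0$-coefficient; this is possible exactly because the $G$-action on $\widehat V$ is mixing away from the fixed point $0$, so that absolutely continuous spectral data concentrating at $0$ stay in $C_0(G)$ for every finite $n$. Making this rigorous for general $a$ and general (possibly non-unimodular) actions, and correctly reducing the structure of $N$ to one of the two model defects, is where the real work lies; the compact reductions of Lemma~\ref{lem:wstarclosedquotient} and the Howe--Moore-type input of Theorem~\ref{thm:veech} are the organising tools.
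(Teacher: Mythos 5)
Your proposal misses the short route the paper actually takes, which renders almost all of your machinery unnecessary: the identity on $L^1(G)$ extends to a contraction $Q_{p,\mathrm{cb}}(G)\to C^*(G)$, whose Banach adjoint is the weak$^*$-weak$^*$-continuous inclusion $B(G)\hookrightarrow M_{\mathrm{cb}}A_p(G)$; consequently $B(G)\cap C_0(G)$ is the preimage under this inclusion of the weak$^*$-closed set $M_{\mathrm{cb}}A_p(G)\cap C_0(G)$, hence weak$^*$-closed in $B(G)$, and the conclusion is then literally \cite[Lemma 2.5]{MR1401762}. What you propose instead is to reprove that structural lemma from scratch in the $p$-completely bounded setting, and your sketch of that reproof has a genuine gap at its decisive step.

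Concretely: the ``unitary representation $\sigma$ of $G$ on $L^2(\widehat V)$ associated with this action'' is not well defined, and no choice of it does what you need. If $\sigma$ comes from the conjugation action of $G$ on $\widehat V$ alone, then $V$ itself acts trivially (an abelian group conjugates trivially on itself), so every coefficient $\langle\sigma(\cdot)f_n,f_n\rangle$ is constant on cosets of the noncompact subgroup $V$ and never lies in $C_0(G)$. If instead you want $V$ to act by multiplication by characters on $L^2(\widehat V)$, this does not extend to a representation of $G$ unless $V$ is complemented, and passing to $\mathrm{Ind}_V^G$ changes the analysis entirely. Worse, the mechanism you invoke to keep each $\varphi_n$ in $C_0(G)$ --- mixing of the dual action away from $0\in\widehat V$ --- is simply absent in your case (B): there $V$ is central in $N$ (possibly in $G$), the dual action is trivial or factors through $G/N$, and no choice of absolutely continuous spectral data concentrating at $0$ forces decay along $N$, let alone along all of $G$. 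You correctly flag ``realising the coefficients on all of $G$ with simultaneous control of the norm and the $C_0$-decay'' as the hard part, but that \emph{is} the content of the lemma, and the proposal contains no argument for it. (Your reformulation of the conclusion as ``$N$ reductive with compact centre'' and your bounded-pointwise-convergence criterion for weak$^*$ convergence are both fine, but they do not carry the proof.)
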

\begin{proof}
Since the identity map $L^1(G) \to L^1(G)$ extends to a contraction $Q_{p,\mathrm{cb}}(G) \to C^*(G)$, its Banach adjoint is a weak$^*$-weak$^*$-continuous contraction $B(G) \to M_{\mathrm{cb}}A_p(G)$ that is easily verified to be the inclusion map. Hence, $B(G) \cap C_0(G)$ is weak$^*$-closed in $B(G)$. The claim now directly follows from \cite[Lemma 2.5]{MR1401762}.
\end{proof}
We can now state and prove the main theorem. The proof of this theorem is inspired by \cite[Section 2]{MR1401762}. We also refer to \cite[Section 3.18]{MR0746308} for more details on the structure theory of Lie groups used in the proof.
\begin{thm} \label{thm:mcbapstructureliegroups}
Let $G$ be a connected Lie group, and let $1 < p < \infty$. Suppose that the semisimple part $S$ of the Levi decomposition of $G$ has finite center. Then the following are equivalent:
\begin{enumerate}[(i)]
        \item The group $G$ is a reductive Lie group with compact center satisfying property (T$^*_{M_{\mathrm{cb}}A_p}$).
        \item The space $M_{\mathrm{cb}}A_p(G) \cap C_0(G)$ is closed in $M_{\mathrm{cb}}A_p(G)$ in the\linebreak $\sigma(M_{\mathrm{cb}}A_p(G),Q_{p,\mathrm{cb}}(G))$-topology.
        \item The group $G$ is a reductive Lie group with compact centre, in which every simple factor has real rank $0$ or at least $2$.
\end{enumerate}
\end{thm}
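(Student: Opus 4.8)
The plan is to prove the three conditions equivalent as a cycle (iii) $\Rightarrow$ (i) $\Rightarrow$ (ii) $\Rightarrow$ (iii), with Veech's Theorem~\ref{thm:veech} as the central tool. The key bridge is the following: for a connected semisimple Lie group $H$ with finite center and without compact simple factors, Theorem~\ref{thm:veech} gives $\mathrm{WAP}(H)=C_0(H)\oplus\mathbb{C}1$ with $m(\varphi)=\lim_{g\to\infty}\varphi(g)$, so that $M_{\mathrm{cb}}A_p(H)\cap C_0(H)=\ker m_X$, where $m_X$ denotes the restriction of the mean to $M_{\mathrm{cb}}A_p(H)$. Since $m_X$ is a nonzero linear functional, its kernel is weak$^*$-closed if and only if $m_X$ is weak$^*$-continuous, i.e.\ if and only if $H$ has property (T$^*_{M_{\mathrm{cb}}A_p}$). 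Thus, in the Veech-applicable setting, condition (ii) is literally property (T$^*_{M_{\mathrm{cb}}A_p}$), and the remaining work is to reduce the general reductive situation to this setting. Throughout I would use that a reductive $G$ with compact center satisfies $G/Z(G)\cong\mathrm{Ad}(S)=\prod_j\mathrm{Ad}(S_j)$, a direct product of centerless simple Lie groups whose real ranks coincide with those of the factors $S_j$.

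For (iii) $\Rightarrow$ (i), the structural assertions of (i) are exactly those of (iii), so only property (T$^*_{M_{\mathrm{cb}}A_p}$) must be produced: by Theorem~\ref{thm:Tstarpsimplegroups} each factor $\mathrm{Ad}(S_j)$ (of real rank $0$ or at least $2$) has this property, by Proposition~\ref{prp:direct product Tstar} so does their product $G/Z(G)$, and since $Z(G)$ is compact, Proposition~\ref{prp:modulo-compact} lifts it to $G$. For (i) $\Rightarrow$ (ii), I would first factor out the compact normal subgroup $Z(G)$ and then the (compact, normal) product $K_c$ of the compact simple factors of $\mathrm{Ad}(S)$, obtaining $H=(G/Z(G))/K_c$, which is semisimple with finite center and no compact factors. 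Property (T$^*_{M_{\mathrm{cb}}A_p}$) descends to $H$ through Proposition~\ref{prp:modulo-compact}, so by the bridge $M_{\mathrm{cb}}A_p(H)\cap C_0(H)$ is weak$^*$-closed; reversing the two compact-quotient steps via Lemma~\ref{lem:wstarclosedquotient} then yields (ii) for $G$.

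For (ii) $\Rightarrow$ (iii), I would apply Lemma~\ref{lem:semisimple} with $N=G$ to conclude that $Z(G)$ is compact and $G/Z(G)$ is semisimple; since the radical of $\mathfrak{g}$ then lies in $\mathfrak{z}(\mathfrak{g})$ and always contains it, we get $\mathfrak{r}=\mathfrak{z}(\mathfrak{g})$, so $G$ is reductive with compact center. Next, Lemma~\ref{lem:wstarclosedquotient} passes (ii) to $G/Z(G)$ and then to $H=(G/Z(G))/K_c$; by the bridge $H$ has property (T$^*_{M_{\mathrm{cb}}A_p}$), which Proposition~\ref{prp:modulo-compact} lifts back to $G$. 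Finally, were some simple factor of real rank $1$, the corresponding $\mathrm{Ad}(S_i)$ would be a quotient of $G$ and hence, by Corollary~\ref{cor:quotients}, would inherit property (T$^*_{M_{\mathrm{cb}}A_p}$)---contradicting Theorem~\ref{thm:Tstarpsimplegroups}. Therefore every simple factor has real rank $0$ or at least $2$, establishing (iii).

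The analytic core---identifying $M_{\mathrm{cb}}A_p(H)\cap C_0(H)$ with $\ker m_X$ and reading off weak$^*$-closedness as weak$^*$-continuity of the mean---is immediate once Veech's theorem has trivialized $\mathrm{WAP}(H)$. The main obstacle I anticipate is instead the structure-theoretic bookkeeping needed to reach the Veech-applicable group $H$: one must correctly identify $Z(G)$, realize $G/Z(G)$ as a direct product of centerless simple groups, isolate the compact factors as a single compact normal subgroup, and verify at each stage that the hypotheses of Lemma~\ref{lem:wstarclosedquotient}, Proposition~\ref{prp:modulo-compact} and Corollary~\ref{cor:quotients} are met, so that both weak$^*$-closedness and property (T$^*_{M_{\mathrm{cb}}A_p}$) can be transported consistently up and down this tower of quotients.
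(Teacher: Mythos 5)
Your proposal is correct and follows essentially the same route as the paper: reduce via Lemma~\ref{lem:wstarclosedquotient} and Proposition~\ref{prp:modulo-compact} to a semisimple quotient $H$ with finite center and no compact factors, use Veech's theorem to identify $M_{\mathrm{cb}}A_p(H)\cap C_0(H)$ with the kernel of the invariant mean (so weak$^*$-closedness equals weak$^*$-continuity of the mean), and handle the rank condition via Theorem~\ref{thm:Tstarpsimplegroups} together with Propositions~\ref{prp:direct product Tstar} and \ref{prp:modulo-compact} and Corollary~\ref{cor:quotients}. The only differences are organisational (a cyclic chain of implications rather than the paper's two separate equivalences, and describing $G/Z(G)$ via $\mathrm{Ad}$ rather than via the universal cover), which do not change the substance.
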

\begin{proof}[Proof of Theorem \ref{thm:mcbapstructureliegroups}]
\
(i) $\implies$ (ii): Suppose that $G$ is a connected reductive Lie group with compact center satisfying property (T$^*_{M_{\mathrm{cb}}A_p}$). Let $\mathfrak{s}$ denote the Lie algebra of $S$, and let $\mathfrak{s}=\mathfrak{s}_1 \oplus \ldots \oplus \mathfrak{s}_n$ be the decomposition of $\mathfrak{s}$ as a direct sum of simple Lie algebras $\mathfrak{s}_i$, with $i=1,\ldots,n$. Let $\widetilde{G}=\mathbb{R}^d\times \widetilde{S}_1 \times \ldots \times \widetilde{S}_n$, where $\widetilde{S}_i$ is the unique simply connected simple Lie group with Lie algebra $\mathfrak{s}_i$ and $\mathbb{R}^d$ is the universal covering group of the (abelian and compact) solvable radical. It is known that this $\widetilde{G}$ is the universal covering group of $G$ (unique up to isomorphism). There exists a discrete subgroup $\Gamma$ of the center $Z(\widetilde{G})=\mathbb{R}^d\times Z(\widetilde{S}_1) \times \ldots \times Z(\widetilde{S}_n)$ of $\widetilde{G}$ such that $G=\widetilde{G}/\Gamma$. Hence, we have
\[
G/Z(G)=\widetilde{G}/Z(\widetilde{G})=\left( \widetilde{S}_1 / Z(\widetilde{S}_1) \right) \times \ldots \times \left( \widetilde{S}_n / Z(\widetilde{S}_n) \right).
\]
It follows that $G/Z(G)$ is semisimple, since the Lie algebra of the right-hand side is $\mathfrak{s}$, which is a semisimple Lie algebra. Also, $G/Z(G)$ is centerless by construction. The quotient $G/Z(G)$ may have compact simple factors, say, without loss of generality, $\widetilde{S}_1 / Z(\widetilde{S}_1),\, \ldots,\, \widetilde{S}_m / Z(\widetilde{S}_m)$ in the right-hand side above. Taking the quotient of $G/Z(G)$ with respect to the compact normal subgroup $\widetilde{S}_1 / Z(\widetilde{S}_1) \times \ldots \times \widetilde{S}_m / Z(\widetilde{S}_m)$, we obtain the quotient group
\[
    H:=(G/Z(G))\,/\,(\widetilde{S}_1/Z(\widetilde{S}_1) \times \ldots \times \widetilde{S}_m / Z(\widetilde{S}_m)),
\]
which is a semisimple Lie group with trivial center and without compact simple factors.

Using Lemma \ref{lem:wstarclosedquotient} twice (i.e.~once for the quotient with respect to $Z(G)$ and once for the quotient with respect to the compact simple factors), we see that it suffices to show that $M_{\mathrm{cb}}A_p(H)\cap C_0(H)$ is weak$^*$-closed in $M_{\mathrm{cb}}A_p(H)$. To this end, let $(\varphi_i)$ be a net in $M_{\mathrm{cb}}A_p(H) \cap C_0(H)$ that converges to $\varphi \in M_{\mathrm{cb}}A_p(H)$ in the weak$^*$-topology. Since $H$ is semisimple, has trivial center and does not have compact simple factors, it now follows from Theorem \ref{thm:veech} that
\[
    m(\varphi)=\lim_{g\to\infty}\varphi(g)\qquad\text{and}\qquad  m(\varphi_i)=\lim_{g\to\infty}\varphi_i(g) \quad \textrm{for all }i,
\]
where $m$ denotes the unique invariant mean on $M_{\mathrm{cb}}A_p(H)$. Since $\varphi_i$ belongs to $C_0(H)$, we have $m(\varphi_i)=0$ for all $i$. Since property (T$^*_{M_{\mathrm{cb}}A_p}$) passes to quotients, the quotient group $H$ also has property (T$^*_{M_{\mathrm{cb}}A_p}$), and hence, we also have $m(\varphi)=\lim_i m(\varphi_i)=0$, so $\varphi$ belongs to $C_0(H)$.

(ii) $\implies$ (i): Suppose that $M_{\mathrm{cb}}A_p(G) \cap C_0(G)$ is weak$^*$-closed in $M_{\mathrm{cb}}A_p(G)$. By Lemma \ref{lem:semisimple}, we know that $Z(G)$ is compact (abelian) and $G/Z(G)$ is semisimple. This implies that the Lie algebra $\mathfrak{g}$ of $G$ is the direct sum of a semisimple Lie algebra (the Lie algebra of $G/Z(G)$) and an abelian Lie algebra, implying that $G$ is a reductive Lie group.

In the same way as in the implication ``(i) $\implies$ (ii)'', by considering the group $G/Z(G)$ and additionally taking the quotient with respect to the compact simple factors, we obtain a semisimple Lie group $H$ with finite center and without compact factors. By applying Lemma \ref{lem:wstarclosedquotient} twice, the space $M_{\mathrm{cb}}A_p(H) \cap C_0(H)$ is weak$^*$-closed in $M_{\mathrm{cb}}A_p(H)$. By Theorem \ref{thm:veech}, the kernel of the unique invariant mean on $M_{\mathrm{cb}}A_p(H)$ coincides with $M_{\mathrm{cb}}A_p(H) \cap C_0(H)$. Hence, this kernel is weak$^*$-closed, so the mean on $M_{\mathrm{cb}}A_p(H)$ is weak$^*$-continuous. This means that the group $H$ has property (T$^*_{M_{\mathrm{cb}}A_p}$). Hence, by applying Proposition \ref{prp:modulo-compact} twice, the group $G$ has property (T$^*_{M_{\mathrm{cb}}A_p}$) as well.

(i) $\iff$ (iii): Again, as in the implication ``(i) $\implies$ (ii)'', we consider the universal covering group $\widetilde{G}=\mathbb{R}^d\times \widetilde{S}_1 \times \ldots \times \widetilde{S}_n$ of $G$, where $\widetilde{S}_1, \ldots, \widetilde{S}_n$ are simply connected simple Lie groups, and let $\Gamma$ be the discrete subgroup of $Z(\widetilde{G})$ with $G=\widetilde{G}/\Gamma$. Hence,
\[
G/Z(G)=\widetilde{G}/Z(\widetilde{G})=\left( \widetilde{S}_1 / Z(\widetilde{S}_1) \right) \times \ldots \times \left( \widetilde{S}_n / Z(\widetilde{S}_n) \right).
\]
Since $Z(G)$ is compact, the group $G$ has property (T$^*_{M_{\mathrm{cb}}A_p}$) if and only if $G/Z(G)$ has property (T$^*_{M_{\mathrm{cb}}A_p}$), by Proposition \ref{prp:modulo-compact}. It follows from Proposition \ref{prp:direct product Tstar} that $G/Z(G)$ has property (T$^*_{M_{\mathrm{cb}}A_p}$) if and only if each $\widetilde{S}_i/Z(\widetilde{S}_i)$ has property (T$^*_{M_{\mathrm{cb}}A_p}$). From Theorem \ref{thm:Tstarpsimplegroups}, we know that $\widetilde{S}_i/Z(\widetilde{S}_i)$ has property (T$^*_{M_{\mathrm{cb}}A_p}$) if and only if $\widetilde{S}_i/Z(\widetilde{S}_i)$, which is centerless simple Lie group (it is simple because its Lie algebra is $\mathfrak{s}_i$, which is a simple Lie algebra), has real rank $0$ or real rank at least $2$.
\end{proof}
\begin{rem}
It is an open question whether there exists a group with property (T$^*$), but without property (T$^*_{M_{\mathrm{cb}}A_p}$) for some $p\neq2$.
\end{rem}

\section{Property (T$^*_{B_p}$) for connected Lie groups}
In this section, we study property (T$^*_{B_p}$) for connected Lie groups, and we prove Theorem \ref{thm:bpstructureliegroups}, which is analogous to (but less explicit than) Theorem \ref{thm:mcbapstructureliegroups}. Indeed, we prove the equivalence of the analogues of the first two equivalent assertions of Theorem \ref{thm:mcbapstructureliegroups}.

First, we establish the following lemma.
\begin{lem} \label{lem:bpw*closedandquotient}
Let $G$ be a locally compact group with a compact normal subgroup $K$. Then $B_p(G) \cap C_0(G)$ is weak$^*$-closed in $B_p(G)$ if and only if $B_p(G/K) \cap C_0(G/K)$ is weak$^*$-closed in $B_p(G/K)$.
\end{lem}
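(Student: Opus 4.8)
The plan is to mirror the proof of Lemma~\ref{lem:wstarclosedquotient}, replacing $M_{\mathrm{cb}}A_p$ throughout by $B_p$ and supplying the two $B_p$-analogues of the weak$^*$-weak$^*$-continuous transfer maps used there. Write $q\colon G\to G/K$ for the quotient map and normalise the Haar measure on the compact group $K$ to have total mass $1$. First I would introduce the inflation operator $T\colon B_p(G/K)\to B_p(G)$, $T\psi=\psi\circ q$, and the averaging operator $\widetilde T\colon B_p(G)\to B_p(G/K)$, $\widetilde T\varphi(gK)=\int_K\varphi(gk)\,dk$. Both are contractions: for $T$ this is immediate, since a matrix coefficient of $\pi\in\mathrm{Rep}_p(G/K)$ pulls back to a matrix coefficient of $\pi\circ q\in\mathrm{Rep}_p(G)$ on the same $QSL^p$-space.

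The main technical point, and the step I expect to be the genuine obstacle, is the well-definedness of $\widetilde T$ as a contraction into $B_p(G/K)$. Given $\varphi(g)=\langle\eta,\pi(g)\xi\rangle$ with $\pi\in\mathrm{Rep}_p(G)$ on $E\in QSL^p$, the vector $\xi_0:=\int_K\pi(k)\xi\,dk$ is $\pi(K)$-fixed and satisfies $\|\xi_0\|\le\|\xi\|$, so $\widetilde T\varphi(gK)=\langle\eta,\pi(g)\xi_0\rangle$. Because $K$ is normal, the closed subspace $E^K$ of $\pi(K)$-fixed vectors is $\pi(G)$-invariant: for $\xi\in E^K$ one has $\pi(k)\pi(g)\xi=\pi(g)\pi(g^{-1}kg)\xi=\pi(g)\xi$, using $g^{-1}kg\in K$. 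Since $QSL^p$ is stable under passing to closed subspaces, $E^K\in QSL^p$, and $\pi$ restricts to an isometric representation of $G$ on $E^K$ that is trivial on $K$, hence descends to some $\bar\pi\in\mathrm{Rep}_p(G/K)$. Therefore $\widetilde T\varphi=\langle\eta|_{E^K},\bar\pi(\cdot)\xi_0\rangle\in B_p(G/K)$ with $\|\widetilde T\varphi\|_{B_p(G/K)}\le\|\eta\|\,\|\xi\|$, and taking the infimum over representing data yields $\|\widetilde T\varphi\|_{B_p(G/K)}\le\|\varphi\|_{B_p(G)}$.

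Next I would verify that both maps are weak$^*$-weak$^*$-continuous by exhibiting them as Banach adjoints of contractions between the preduals $Y(G)$ and $Y(G/K)$ of Theorem~\ref{thm:predual}. A direct computation with the duality pairing (using Weil's formula) shows that $T$ is the adjoint of the coset-integration map $L^1(G)\to L^1(G/K)$ and that $\widetilde T$ is the adjoint of the inflation map $L^1(G/K)\to L^1(G)$, $h\mapsto h\circ q$; contractivity of each of these for the respective predual norms follows formally from the contractivity of $T$ and $\widetilde T$, e.g.\ $\|h\circ q\|_{Y(G)}=\sup_{\|\varphi\|_{B_p(G)}\le1}|\langle\widetilde T\varphi,h\rangle|\le\|h\|_{Y(G/K)}$. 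I would then record that $T$ maps $B_p(G/K)\cap C_0(G/K)$ into $B_p(G)\cap C_0(G)$ and that $\widetilde T$ maps $B_p(G)\cap C_0(G)$ into $B_p(G/K)\cap C_0(G/K)$; both rely only on $q$ being a proper map, which holds because $K$ is compact.

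With these two operators in hand, the two implications follow by the same net-chasing argument as in Lemma~\ref{lem:wstarclosedquotient}. Assuming $B_p(G)\cap C_0(G)$ is weak$^*$-closed, a weak$^*$-convergent net in $B_p(G/K)\cap C_0(G/K)$ is carried by $T$ into $B_p(G)\cap C_0(G)$, and the weak$^*$-continuity of $T$ together with properness of $q$ forces the limit into $C_0(G/K)$; conversely, assuming weak$^*$-closedness for $G/K$, one applies $\widetilde T$ to a weak$^*$-convergent net in $B_p(G)\cap C_0(G)$ and argues exactly as there. The only part that is not purely formal is the construction and continuity of $\widetilde T$ described above, which is precisely where the $QSL^p$-structure genuinely enters.
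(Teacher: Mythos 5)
Your proposal is correct and follows essentially the same route as the paper: the paper's proof consists precisely of introducing the inflation operator $T$ and the averaging operator $\widetilde T$ and asserting their weak$^*$-weak$^*$-continuity (by reference to the arguments of Vergara's Proposition 5.1), after which the net-chasing from Lemma \ref{lem:wstarclosedquotient} applies verbatim. Your write-up merely makes explicit what the paper outsources to that citation, namely the passage to the $\pi(K)$-fixed subspace $E^K$ (using normality of $K$ and stability of $QSL^p$ under closed subspaces) and the identification of $T$ and $\widetilde T$ as adjoints of contractions between the preduals.
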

\begin{proof}
By the same arguments as given in the proof of \cite[Proposition 5.1]{MR3706609}, the operator $T \colon B_p(G/K) \to B_p(G), \; \psi \mapsto \psi \circ q$ and the operator $\widetilde{T} \colon B_p(G)\to B_p(G/K)$ defined by $\widetilde{T}(\psi)(gK)=\int_{K}\psi(gk) dk$ are weak$^*$-weak$^*$-continuous.
\end{proof}

We have now established the right structural properties for $B_p(G)$ and permanence properties for property (T$^*_{B_p}$). The proof of the following theorem is similar to the corresponding parts of the proof of Theorem \ref{thm:mcbapstructureliegroups}. We only explain the relevant points of the proof.
\begin{thm} \label{thm:bpstructureliegroups}
Let $G$ be a connected Lie group, and let $1 < p < \infty$. Suppose that the semisimple part $S$ of the Levi decomposition of $G$ has finite center. Then the following are equivalent:
\begin{enumerate}[(i)]
    \item The group $G$ is a reductive Lie group with compact center satisfying property (T$^*_{B_p}$).
    \item The space $B_p(G) \cap C_0(G)$ is closed in $B_p(G)$ in the $\sigma(B_p(G),B_p(G)_{*})$-topology. 
\end{enumerate}
\end{thm}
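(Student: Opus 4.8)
The plan is to follow the template of Theorem \ref{thm:mcbapstructureliegroups} line by line, replacing every invocation of a result about $M_{\mathrm{cb}}A_p(G)$ by the corresponding result for $B_p(G)$. The whole argument is built from four ingredients: (a) the characterisation of the mean on $\mathrm{WAP}(G)$ for semisimple $G$ via Veech's Theorem \ref{thm:veech}, which is algebra-agnostic; (b) the fact that $B(G)$ embeds weak$^*$-weak$^*$-continuously into $B_p(G)$, so that Lemma \ref{lem:semisimple} (which only uses this embedding and the classical result \cite[Lemma 2.5]{MR1401762}) applies verbatim; (c) the quotient lemma Lemma \ref{lem:bpw*closedandquotient}, which plays the exact role that Lemma \ref{lem:wstarclosedquotient} played before; and (d) permanence of property (T$^*_{B_p}$) under passing to quotients and modulo compact subgroups. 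The main point to verify is that these permanence properties are in fact available for $B_p$ in the same form as for $M_{\mathrm{cb}}A_p$.

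First I would prove the implication (i) $\Rightarrow$ (ii). Assuming $G$ is reductive with compact center and has property (T$^*_{B_p}$), I pass to the universal cover $\widetilde{G}=\mathbb{R}^d\times\widetilde{S}_1\times\cdots\times\widetilde{S}_n$ and write $G=\widetilde{G}/\Gamma$ for a discrete central $\Gamma$, exactly as in the proof of Theorem \ref{thm:mcbapstructureliegroups}. Quotienting out $Z(G)$ and then the compact simple factors produces a semisimple Lie group $H$ with trivial center and no compact simple factors. By Lemma \ref{lem:bpw*closedandquotient} (applied twice), it suffices to show $B_p(H)\cap C_0(H)$ is weak$^*$-closed in $B_p(H)$. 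For a net $(\varphi_i)$ in $B_p(H)\cap C_0(H)$ converging weak$^*$ to $\varphi\in B_p(H)$, Veech's theorem gives $m(\varphi_i)=\lim_{g\to\infty}\varphi_i(g)=0$ and $m(\varphi)=\lim_{g\to\infty}\varphi(g)$; since (T$^*_{B_p}$) passes to the quotient $H$ and the mean is then weak$^*$-continuous, $m(\varphi)=\lim_i m(\varphi_i)=0$, forcing $\varphi\in C_0(H)$.

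For (ii) $\Rightarrow$ (i) I would run the reverse direction. Weak$^*$-closedness of $B_p(G)\cap C_0(G)$ together with Lemma \ref{lem:semisimple} gives that $Z(G)$ is compact and $G/Z(G)$ is semisimple, whence $\mathfrak{g}$ is the direct sum of an abelian and a semisimple Lie algebra, so $G$ is reductive. Passing as before to the centerless semisimple group $H$ without compact factors and invoking Lemma \ref{lem:bpw*closedandquotient} twice, I obtain that $B_p(H)\cap C_0(H)$ is weak$^*$-closed. By Theorem \ref{thm:veech} this kernel equals $\ker m$, so the mean on $B_p(H)$ is weak$^*$-continuous, i.e.\ $H$ has property (T$^*_{B_p}$); then property (T$^*_{B_p}$) lifts back through the compact quotients to $G$.

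The main obstacle is ingredient (d): the proof above silently uses that property (T$^*_{B_p}$) passes to quotients and is stable modulo compact normal subgroups, i.e.\ the $B_p$-analogues of Corollary \ref{cor:quotients} and Proposition \ref{prp:modulo-compact}. These were proved for $M_{\mathrm{cb}}A_p$ via the weak$^*$-weak$^*$-continuous transfer maps of Lemma \ref{lem:grouphomomorphism} and the submultiplicativity of the predual norm (Lemma \ref{lem:submultiplicativityQ}); for $B_p$ the corresponding transfer maps are exactly the operators $T$ and $\widetilde{T}$ of Lemma \ref{lem:bpw*closedandquotient}, so quotient-permanence is immediate, and modulo-compact stability follows the same way. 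I would therefore note explicitly that the permanence statements hold for $B_p$ by the identical arguments, using Lemma \ref{lem:bpw*closedandquotient} in place of Lemma \ref{lem:wstarclosedquotient}, and that the embedding $B(G)\hookrightarrow B_p(G)$ is weak$^*$-weak$^*$-continuous (being the adjoint of the contraction $B_p(G)_*\to C^*(G)$ induced by $\mathrm{id}\colon L^1(G)\to L^1(G)$), which is what makes Lemma \ref{lem:semisimple} applicable. With these two observations in hand the proof is a routine transcription of the first two equivalences of Theorem \ref{thm:mcbapstructureliegroups}.
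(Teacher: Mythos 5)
Your proposal is correct and takes essentially the same route as the paper, whose own proof is given only as ``mutatis mutandis from the proof of Theorem \ref{thm:mcbapstructureliegroups}'' with Lemma \ref{lem:bpw*closedandquotient} supplying the quotient transfer maps. Your explicit identification of the remaining $B_p$-analogues needed --- the weak$^*$-weak$^*$-continuous inclusion $B(G)\hookrightarrow B_p(G)$ that makes the argument of Lemma \ref{lem:semisimple} go through, and the modulo-compact permanence of property (T$^*_{B_p}$) via the operators $T$ and $\widetilde{T}$ --- is exactly the intended substitution.
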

The proof of this theorem follows mutatis mutandis from the proof of Theorem \ref{thm:mcbapstructureliegroups}.
\begin{rem}
Theorem \ref{thm:bpstructureliegroups} generalises \cite[Theorem 2.7]{MR1401762}. An important difference with the situation of property (T), as covered in \cite{MR1401762}, however, is that it is known exactly which simple Lie groups have property (T). Indeed, let $G$ be a connected simple Lie group. Then $G$ has property (T) if and only if $G$ has real rank $0$, real rank at least $2$ or if $G$ is locally isomorphic to $\mathrm{Sp}(n,1)$ (with $n \geq 2$) or to $F_{4(-20)}$ (in the latter two cases, $G$ has real rank $1$).

It is clear that if $G$ has real rank $0$ or at least $2$, then $G$ has property (T$^*_{B_p}$), which follows from the fact that in these cases, $G$ has property (T$^*_{M_{\mathrm{cb}}A_p}$) (see Theorem \ref{thm:Tstarpsimplegroups}), because property (T$^*_{M_{\mathrm{cb}}A_p}$) is stronger than property (T$^*_{B_p}$).

However, if $G$ is locally isomorphic to $\mathrm{Sp}(n,1)$ (with $n \geq 2$) or to $F_{4(-20)}$, we do not know what happens, although we expect that these groups have property (T$^*_{B_p}$) for $1 < p < \infty$. In the remaining cases of simple Lie groups with real rank $1$, i.e.~groups locally isomorphic to $\mathrm{SO}_0(n,1)$ or $\mathrm{SU}(n,1)$ (with $n \geq 2$), we know that these groups have the Haagerup property, in which case they cannot have property (T$^*_{B_p}$) for $1 < p < \infty$.
\end{rem}

\end{document}